\documentclass[12pt]{amsart} 
\usepackage{amssymb,amsmath,latexsym,enumerate,graphicx,bbm,mathptmx,microtype}
\hoffset=0in 
\voffset=0in
\oddsidemargin=0in
\evensidemargin=0in
\topmargin=0.3in 
\headsep=0.15in 
\headheight=8pt
\textwidth=6.5in
\textheight=8.5in
\bibliographystyle{amsplain}
\newtheorem{theorem}{Theorem} % [section]
\newtheorem{corollary}[theorem]{Corollary}

\newtheorem{lemma}[theorem]{Lemma}

\newtheorem*{rem}{Remarks}

\newtheorem*{exam}{Examples}

 % {\sc to do}: 
\newcommand\fl[1]{\left\lfloor {#1} \right\rfloor} 
\newcommand\saw[1]{\left(\!\left( #1 \right)\!\right)}
\newcommand\highlight[1]{\emph{#1}}

\def\Z{\mathbbm{Z}}
\def\N{\mathbbm{N}}
\def\R{\mathbbm{R}}

\def\B{{\overline B}}

\def\s{s}

\title{Bernoulli--Dedekind Sums}

\author{Matthias Beck}
\address{Department of Mathematics\\
         San Francisco State University\\
         San Francisco, CA 94132\\
         U.S.A.}
\email{beck@math.sfsu.edu, anastasia.chavez@gmail.com}

\author{Anastasia Chavez}

\subjclass[2000]{11F20, 11B68}
% 11F20 Dedekind eta function, Dedekind sums
% 11B68 Bernoulli and Euler numbers and polynomials
\keywords{Dedekind sum, Bernoulli polynomial, reciprocity theorem.}
\thanks{
Research of M.\ Beck partially supported by NSF (DMS-0810105).
Research of A.\ Chavez partially supported by NSF (HRD-0331537) and NIH (RISE 5R25-6M059298-10).
We thank Mejid Bayad for stimulating discussions about a previous version of this paper.
}
\date{30 July 2010}

\begin{document}

\begin{abstract}
Let $p_1,p_2,\dots,p_n, a_1,a_2,\dots,a_n \in \N$, $x_1,x_2,\dots,x_n \in \R$, and denote the $k$th periodized Bernoulli polynomial by $\B_k(x)$.
We study expressions of the form
\[
  \sum_{h \bmod{a_k}} \ \prod_{\substack{i=1\\ i\not=k}}^{n} \ \B_{p_i}\left(a_i \frac{h+x_k}{a_k}-x_i\right).
\]
These \highlight{Bernoulli--Dedekind sums} generalize and unify various arithmetic sums introduced by
Dedekind, Apostol, Carlitz, Rademacher, Sczech, Hall--Wilson--Zagier, and others.
Generalized Dedekind sums appear in various areas such as analytic and algebraic number theory,
topology, algebraic and combinatorial geometry, and algorithmic complexity.
We exhibit a reciprocity theorem for the Bernoulli--Dedekind sums, which gives a unifying picture through a simple combinatorial proof.
\end{abstract}

\maketitle

%%%%%%%%%%%%%%%%%%%%%%%%%%%%%%%%%%%%%%%%%%%%%%%%%%%%%%%%%%%%%%%%%%%%%%%%%%%

\section{Introduction}

While studying the transformation properties of   
\[ \eta (z) := e^{ \frac{\pi i z}{12} } \prod_{ n \geq 1 } \left( 1 - e^{ 2 \pi i n z } \right) , \]
under $ \mbox{SL}_{2} ( \Z ) $, Dedekind, in the 1880's \cite{dedekind}, naturally arrived at the following arithmetic function:
Let $ \saw x $ be the sawtooth function defined by
  \begin{equation}\label{doublebrackets} \saw{x} := \left\{ \begin{array}{cl} \{ x \} - \frac{ 1 }{ 2 } & \mbox{ if }
 x \not\in \Z, \\
                                                                            0                         & \mbox{ if }
 x \in \Z . \end{array} \right. \end{equation}
Here $ \{ x \} = x - \fl x $ denotes the fractional part of $x$. 
For $a, b \in \N := \left\{ n \in \Z : n > 0 \right\} $, we define the \highlight{Dedekind sum} as
  \begin{equation}\label{dedsaw}
  \s (a,b) := \sum_{ k \bmod b } \saw{ \frac{ ka }{ b } } \saw{ \frac{ k }{ b } } . \end{equation}
The Dedekind sum and its generalizations have since intrigued mathematicians from various areas such as analytic \cite{almkvist,dedekind,dieterdedekind} and algebraic number theory \cite{meyerdedekind,solomondedsum}, topology \cite{hirzebruchzagier,meyersczech,zagier}, algebraic \cite{brionvergne,pommersheim,urzua} and combinatorial geometry \cite{ccd,mordell}, and algorithmic complexity \cite{knuth}. 

By means of the discrete Fourier series of the sawtooth function (see, for example, \cite[p.~14]{grosswald}),
it is not hard to write the Dedekind sum in terms of cotangents:
  \begin{equation}\label{dedcot} \s(a,b) = \frac{ 1 }{ 4b } \sum_{ k=1 }^{ b-1 } \cot \frac{ \pi k a }{ b } \cot \frac{ \pi k }{ b }\, .  \end{equation}
Starting with these two representations (\ref{dedsaw}) and (\ref{dedcot}) of $ \s (a, b) $, various generalizations of the Dedekind sum have been introduced. 
A previous paper \cite{cotangent} by the first author attempted to unify generalizations of the Dedekind sum in its `cotangent representation' (\ref{dedcot}). The present paper complements this by introducing a vast generalization of the `sawtooth representation' (\ref{dedsaw}) of the Dedekind sum.
The sawtooth function is the first periodized Bernoulli polynomial $\B_1 (u)$, where the \highlight{Bernoulli polynomial} $B_k(u)$ is defined, as usual, through
\[
  \frac{e^{uz}}{e^z-1}
  =\sum_{k\geq 0}\frac{B_k(u)}{k!} \, z^{k-1} ,
\]
and its periodized counterpart $\B_k(u)$ is defined as the unique function that is periodic with period 1 and coincides with $B_k(u)$ on $[0,1)$, except that we set $\B_1(u)=0$ for $u\in \Z$.
To define our generalization, let $p_1,p_2,\dots,p_n, a_1,a_2,\dots,a_n \in \N$, $x_1,x_2,\dots,x_n \in \R$, and
\[
  A_k := \left(a_1,a_2,\dots,\widehat{a_k},\dots,a_n\right), \qquad 
  X_k:=\left(x_1,x_2,\dots,\widehat{x_k},\dots,x_n\right) , \qquad
  P_k := \left(p_1,p_2,\dots,\widehat{p_k},\dots,p_n\right) ,
\]
where $\widehat{a}_k$ means we omit the entry $a_k$. Then we define the \highlight{Bernoulli--Dedekind sum} as 
\begin{equation}\label{bdsum}
 S_{P_{k}}\left(\begin{matrix} A_k & a_k \\ X_k & x_k \end{matrix} \right)
 := \sum_{h \bmod{a_k}} \ \prod_{\substack{i=1\\ i\not=k}}^{n} \ \B_{p_i}\left(a_i \frac{h+x_k}{a_k}-x_i\right).
\end{equation}
The Bernoulli--Dedekind sums include as special cases various previously-defined Dedekind-like sums, which we will discuss in detail in Section~\ref{history}.

The most fundamental and important theorems for any of the generalized De\-de\-kind sums are the
{\it reciprocity laws}: an appropriate sum of generalized Dedekind sums (usually permuting the
arguments in a cyclic fashion) gives a simple rational expression. 
The famous reciprocity law for the classical Dedekind sum is as old as the sum itself: 
\begin{theorem}[Dedekind]\label{dedreclaw}
If $a,b \in \N$ are coprime then
  \[ \s (a,b) + \s (b,a) = - \frac{1}{4} + \frac{1}{12} \left( \frac{ a }{ b } + \frac{ 1 }{ ab } + \frac{ b }{ a }
 \right) \ . \]
\end{theorem}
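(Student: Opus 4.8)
The plan is to prove Dedekind reciprocity using the sawtooth definition~\eqref{dedsaw} together with the discrete Fourier/Euler--Maclaurin philosophy, but I will favor the cleanest classical route: a finite partial-fraction (or lattice-counting) identity. Concretely, I would start from the generating function
\[
  F(w,z) := \frac{1}{(1-w^{a})(1-z^{b})}
\]
or, equivalently, work with the rational function whose partial-fraction decomposition over the $a$th and $b$th roots of unity produces the two cotangent sums appearing in~\eqref{dedcot}. The idea is that one and the same quantity can be expanded in two ways, and comparing the two expansions yields $\s(a,b)+\s(b,a)$ on one side and an explicit rational function of $a,b$ on the other.

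First I would set up the contour-integral / residue identity: consider
\[
  \frac{1}{8\pi i}\oint \cot(\pi a w)\,\cot(\pi b w)\,\cot(\pi w)\,\frac{\dd w}{w}\qquad\text{(formally)}
\]
around a large contour, and observe that the sum of all residues is zero. Using coprimality of $a$ and $b$, the poles split into three families: the pole at $w=0$, the poles at $w=k/a$ ($k=1,\dots,a-1$), and the poles at $w=k/b$ ($k=1,\dots,b-1$). The residues at $w=k/a$ assemble into $\s(b,a)$ via~\eqref{dedcot}, those at $w=k/b$ into $\s(a,b)$, and the residue at $w=0$—obtained from the Laurent expansion $\cot(\pi t)=\tfrac{1}{\pi t}-\tfrac{\pi t}{3}-\cdots$—contributes exactly the rational term $-\tfrac14+\tfrac1{12}\!\left(\tfrac ab+\tfrac1{ab}+\tfrac ba\right)$. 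Setting the total to zero and rearranging gives the stated identity.

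The main obstacle, and the step I would spend the most care on, is the residue computation at $w=0$: three cotangent factors each have a simple pole there, so the product has a triple pole, and extracting the constant term of the Laurent expansion correctly (keeping track of the $-\tfrac14$ and each of the three rational pieces $\tfrac ab,\ \tfrac1{ab},\ \tfrac ba$) is exactly where the arithmetic must be done honestly. A secondary technical point is justifying that the contributions from the large contour vanish in the limit, which follows from the boundedness of $\cot(\pi N w)$ on suitably chosen square contours avoiding the poles; coprimality guarantees the three pole families are disjoint away from the integers, so no residues are double-counted.

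An alternative I would keep in reserve—should the reader prefer an elementary argument—is the purely combinatorial reciprocity via the three-term sawtooth identity $\saw{x}+\saw{-x}=0$ and the Petersson--Rademacher lattice-point count of $\sum_{k=1}^{b-1}\saw{ka/b}\saw{k/b}$, summing $\saw{\tfrac{ka}{b}}\saw{\tfrac{k}{b}}$ over a fundamental domain and exchanging the roles of $a$ and $b$. This avoids complex analysis entirely, at the cost of a more delicate bookkeeping of the lattice points on the boundary; the rational right-hand side then emerges from counting interior versus boundary points of the triangle with legs $a$ and $b$. Either route terminates in the same closed form, so I would present the residue argument as the main proof and mention the lattice-count version as a remark.
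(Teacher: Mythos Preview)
The paper does not give its own proof of Dedekind's reciprocity law; Theorem~\ref{dedreclaw} is stated as a classical result and attributed to Dedekind. The only route the paper indicates is the remark after Theorem~\ref{hwzreciprocity} that deriving Theorem~\ref{dedreclaw} from the Hall--Wilson--Zagier reciprocity is ``a somewhat nontrivial (but fun) exercise.'' Since the paper proves Theorem~\ref{hwzreciprocity} in Section~\ref{hwzsection} by the $\beta$-function / exponential-sum machinery, the paper's implicit proof of Theorem~\ref{dedreclaw} is: specialize Theorem~\ref{hwzreciprocity} to $(a_1,a_2,a_3)=(a,b,1)$, $(x_1,x_2,x_3)=(0,0,0)$, and extract the coefficient of $y_1 y_2$ (equivalently, set $p_1=p_2=2$), which after some bookkeeping with $\B_2$ yields the classical identity. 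Your proposal takes a completely different route---the classical three-cotangent residue argument---which is independent of the paper's generating-function framework. Both are valid; yours is more direct for this single identity, while the paper's route illustrates that Theorem~\ref{dedreclaw} sits inside the general Bernoulli--Dedekind reciprocity.

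Two technical points on your sketch are worth tightening. First, the integrand you wrote carries an extra factor $1/w$: with $\cot(\pi a w)\cot(\pi b w)\cot(\pi w)\,w^{-1}$ the pole at $w=0$ has order four, not three, contradicting your own description; the standard version uses just $\cot(\pi a w)\cot(\pi b w)\cot(\pi w)\,dw$, and then the triple-pole computation at $0$ via $\cot(\pi t)=\tfrac{1}{\pi t}-\tfrac{\pi t}{3}+O(t^3)$ indeed produces $-\tfrac{1}{\pi}\bigl(\tfrac{a}{3b}+\tfrac{1}{3ab}+\tfrac{b}{3a}-1\bigr)$, matching the right-hand side after the normalization in~\eqref{dedcot}. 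Second, the ``large contour $\to\infty$'' justification is not quite right as stated: the product of three cotangents is bounded but does not decay, so the contour integral over squares of side $2N+1$ does not tend to zero. The usual fix is either to integrate over a single fixed period parallelogram (using that the integrand is odd and periodic), or to choose the growing contours so that the bounded integrand times the $O(N)$ length is controlled by an explicit cancellation; either way this step deserves a sentence more than ``follows from boundedness.''
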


Our main goal in this paper is to prove a reciprocity theorem for the Bernoulli--Dedekind sums, which is most conveniently stated in terms of generating functions. For nonzero variables $y_1,y_2,\dots,y_n$, let $Y_k:=\left(y_1,y_2,\dots,\widehat{y_k},\dots,y_n\right)$ and
\[
  \Omega \left(\begin{matrix} A_k & a_k \\ X_k & x_k \\ Y_k & \end{matrix} \right)
  := \sum_{ \left( p_1, \dots, \widehat{p_k}, \dots, p_n \right) \in \Z_{ \ge 0 }^{ n-1 } } \frac{1}{p_1!p_2!\cdots p_{k-1}! p_{k+1}! \cdots p_n!} \ S_{P_{k}}\left(\begin{matrix} A_k & a_k \\ X_k & x_k \end{matrix} \right)\prod_{\substack{i=1\\ i\not=k}}^n \left(\frac{y_i}{a_i}\right)^{p_i-1}.
\]
Our main result is as follows.

\begin{theorem}\label{multivariatereciprocitytheorem}
Let $a_1,a_2,\dots,a_n \in \N$ be pairwise coprime, $x_1,x_2,\dots,x_n \in \R$, and let $y_1,y_2,\dots,y_n$ be nonzero variables such that $y_1+y_2+\cdots+y_n=0$.
If $\frac{x_u-h_u}{a_u}-\frac{x_v-h_v}{a_v}\not\in\Z$ whenever $1\leq u<v \leq n$ and $h_u,h_v\in\Z$, then
\[
  \sum_{k=1}^{n}\Omega\left(\begin{matrix} A_k & a_k \\ X_k & x_k \\ Y_k & \end{matrix}\right)=0 \, .
\]
\end{theorem}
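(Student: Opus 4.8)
The plan is to feed the generating function of the Bernoulli polynomials into each summand $\Omega_k$ and reduce the whole identity to a single telescoping cancellation forced by the two linear relations $y_1+\cdots+y_n=0$ and a lattice constraint $m_1a_1+\cdots+m_na_n=0$. First I would insert the definition~(\ref{bdsum}) of $S_{P_k}$ into $\Omega_k$, interchange the sum over the exponents $(p_i)_{i\neq k}$ with the finite sum over $h\bmod a_k$ (legitimate since, for $|y_i|<2\pi a_i$, the series in the $p_i$ converges absolutely), and factor the summand over the index $i$. Writing $u_i:=a_i\frac{h+x_k}{a_k}-x_i$ and $w_i:=y_i/a_i$, this presents each inner factor as $\sum_{p_i\ge0}\frac{\B_{p_i}(u_i)}{p_i!}\,w_i^{\,p_i-1}$.

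The hypothesis $\frac{x_u-h_u}{a_u}-\frac{x_v-h_v}{a_v}\notin\Z$ is precisely what guarantees $u_i\notin\Z$ for every $i\neq k$ and every $h$, so the exceptional convention $\B_1=0$ on $\Z$ never intervenes and one has $\B_{p_i}(u_i)=B_{p_i}(\{u_i\})$ uniformly in $p_i$. Each factor therefore collapses to the closed form $\frac{e^{\{u_i\}w_i}}{e^{w_i}-1}$, and I would then apply its partial-fraction (Fourier) expansion
\[
  \frac{e^{\{u_i\}w_i}}{e^{w_i}-1}=\sum_{m_i\in\Z}\frac{e^{2\pi i\,m_iu_i}}{w_i-2\pi i\,m_i}=\sum_{m_i\in\Z}\frac{a_i\,e^{-2\pi i\,m_ix_i}}{y_i-2\pi i\,m_ia_i}\,e^{2\pi i\,m_ia_i(h+x_k)/a_k},
\]
valid exactly because $u_i\notin\Z$.

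Next I would swap the product over $i\neq k$ with the sums over the $m_i$ and perform the geometric sum over $h\bmod a_k$. Only the exponentials $e^{2\pi i\,m_ia_ih/a_k}$ involve $h$, and their product contributes $e^{2\pi i\,h(\sum_{i\neq k}m_ia_i)/a_k}$; hence $\sum_{h\bmod a_k}(\cdots)$ equals $a_k$ when $a_k\mid\sum_{i\neq k}m_ia_i$ and vanishes otherwise. On this divisibility locus there is a unique integer $m_k:=-\frac1{a_k}\sum_{i\neq k}m_ia_i$, the condition rewrites as $\sum_{i=1}^nm_ia_i=0$, and the leftover phase equals $e^{-2\pi i\,m_kx_k}$. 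Reinstating the (now genuinely present) $k$th factor of the product turns this into the manifestly symmetric expression
\[
  \Omega_k=\sum_{\substack{(m_1,\dots,m_n)\in\Z^n\\ m_1a_1+\cdots+m_na_n=0}}\left(\prod_{i=1}^n\frac{a_i\,e^{-2\pi i\,m_ix_i}}{y_i-2\pi i\,m_ia_i}\right)\left(y_k-2\pi i\,m_ka_k\right).
\]

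Finally I would sum over $k$. The bracketed product no longer depends on $k$, while $\sum_{k=1}^n\left(y_k-2\pi i\,m_ka_k\right)=\big(\sum_ky_k\big)-2\pi i\big(\sum_km_ka_k\big)=0$ by the hypothesis $\sum_ky_k=0$ together with the summation constraint $\sum_km_ka_k=0$; thus every lattice term dies and $\sum_k\Omega_k=0$. I expect the main obstacle to be analytic rather than algebraic: the partial-fraction series above converge only conditionally (as symmetric partial sums in the $m_i$), so one must verify that the interchange with the geometric sum and the termwise cancellation respect this summation order, for instance by working in a region where the products regroup absolutely or through a principal-value argument. Notably, the decisive inputs to the cancellation are the non-integrality condition and the two linear relations; the pairwise-coprimality hypothesis, while assumed, is not actually needed for this particular argument.
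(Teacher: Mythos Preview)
Your argument is essentially correct and is a genuinely different proof from the paper's. The paper, after using Raabe's formula to symmetrize the sum and arriving at the same closed form $\prod_{j\neq k}\frac{e^{\{r_k-r_j\}y_j}}{e^{y_j}-1}$, does \emph{not} Fourier-expand. Instead it puts the $n$ terms over the common denominator $\prod_j(e^{y_j}-1)$ and shows the resulting numerator vanishes by a purely combinatorial argument: the exponents are encoded as rows of two $n\times(n-1)$ sign matrices $M_{\mathrm{pos}}$ and $M_{\mathrm{neg}}$, and a sequence of lemmas about the signs $\sigma_{kj}=\operatorname{sign}(\{r_k-r_j\}-\{r_k-r_n\})$ shows these matrices coincide up to a row permutation, so the positive and negative terms cancel in pairs. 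Your route---partial-fraction expansion, orthogonality of characters over $h\bmod a_k$, and the observation that the resulting lattice sum carries the factor $\sum_k(y_k-2\pi i\,m_ka_k)=0$---is the Sczech/Eisenstein-cocycle mechanism the paper alludes to but deliberately avoids.

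What each approach buys: the paper's proof is completely elementary and has no analytic subtleties---everything is a finite manipulation of exponentials---at the cost of a somewhat opaque matching argument whose structural meaning is hidden. Your proof explains transparently \emph{why} the identity holds (two linear constraints annihilate a linear factor) and, as you note, makes it visible that pairwise coprimality is not actually used; the price is the conditional convergence of the Mittag--Leffler series, which you correctly identify as the only real obstacle. That obstacle is genuine: the product of $n-1$ such series is not absolutely convergent over $\Z^{n-1}$, so the interchange with the character sum and the termwise cancellation need a regularization (Hecke $|m|^{-s}$ damping with continuation to $s=0$, or symmetric truncation, or restriction to a domain in the $y$-variables where one can reorganize absolutely). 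Any of these would complete your argument, but as written the proof is a correct outline rather than a finished proof.
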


The next section will illustrate the span from \eqref{dedsaw} to \eqref{bdsum} (and from Theorem \ref{dedreclaw} to Theorem \ref{multivariatereciprocitytheorem}).
Section \ref{proofsection} is devoted to the proof of Theorem~\ref{multivariatereciprocitytheorem}.
As an interlude, we exhibit in Section \ref{petknoppsection} a Petersson--Knopp identity \cite{knoppdedekind} for the Bernoulli--Dedekind sum.
In Section \ref{hwzsection}, we show that---within limits---our ideas can also be applied to more general reciprocity theorems, namely versions of Theorem \ref{multivariatereciprocitytheorem} in which the condition $\frac{x_u-h_u}{a_u}-\frac{x_v-h_v}{a_v}\not\in\Z$ can be omitted.

% -------------------------------------------

\section{Various Dedekind-like sums}\label{history} 
In this section we will give an overview of previously-defined generalizations of the Dedekind sum (rather, of its `sawtooth representation' (\ref{dedsaw})). 
We do not claim any completeness but hope to give some picture of what has been introduced in the past. 

Apostol \cite{apostoldedekind} replaced one of the sawtooth functions in (\ref{dedsaw}) by an arbitrary Bernoulli function:
  \begin{equation}\label{apo} \sum_{ k \bmod b } \saw{\frac{k}{b}} \B_{n} \left( \frac{ka }{b } 
\right) .  \end{equation}
Apostol's idea was generalized by Carlitz \cite{carlitzbernoulli} and Mikol\'as \cite{mikolas} 
to
  \[ \sum_{ k \bmod b } \B_{m} \left( \frac{ kb }{ a } \right) \B_{n} \left( \frac{ kc }{ a } \right) . \] 
% where $ a, b, c, m, n \in \N $.
Another way of generalizing (\ref{dedsaw}) is to shift the argument of the sawtooth functions.
This was introduced by Meyer \cite{meyerdedekind} and Dieter \cite{dieterdedekind}, and brought to a solid
ground by Rademacher \cite{rademacherdedekind}: 
For $ a,b \in \N $, $ x,y \in \R $, the \highlight{Dedekind--Rademacher sum} is defined by
  \[ \sum_{ k \bmod b } \saw{ a \frac{ k+y }{ b } - x } \saw{ \frac{ k+y }{ b } } \ . \]
% Note that there is no loss in restricting $x$ and $y$ to $ 0 \leq x, y < 1 $.
The ideas of Apostol and Rademacher can also be combined: Tak\'acs \cite{takacs} introduced a shift in Apostol's sum (\ref{apo}):
  \[ \sum_{ k \bmod b } \saw{ \frac{k+y}{b} } \B_{n} \left( a \frac{k+y}{b } -
 x \right) \ . \]
This was further generalized by Halbritter \cite{halbritter} and later by Hall, Wilson, and Zagier \cite{hallwilsonzagier} to 
  \[ \sum_{h \bmod{c}}\B_m\left(a \frac{h+z}{c}-x\right)\B_n\left(b\frac{h+z}{c}-y\right) , \] 
where $ a,b,c, m, n \in \N $ and $ x,y,z \in \R $.
The Hall--Wilson--Zagier sum, in turn, is a special case of our Bernoulli--Dedekind sum, namely, 
$
S_{m,n}\left(\begin{matrix}A_3 & a_3 \\ X_3 & x_3 \end{matrix}\right) ,
$
where $(a_1,a_2,a_3)=(a,b,c)$, $(x_1, x_2, x_3)=(x,y,z)$, and $P_3=(m,n)$ (and thus $A_3=\left(a,b\right)$ and $X_3=\left(x,y\right)$).

The central theorems of all of the above-cited papers are reciprocity theorems for each generalized Dedekind sum.
To give one example, we state the reciprocity theorem of~\cite{hallwilsonzagier}.

\begin{theorem}[Hall--Wilson--Zagier]\label{hwzreciprocity}
Let $a_1,a_2,a_3 \in \N$ be pairwise coprime, $x_1,x_2,x_3 \in \R$, and let $y_1,y_2,y_3$ be nonzero variables such that $y_1+y_2+y_3=0$. Then
\begin{align*}
  & \Omega\left(\begin{matrix} a_1 & a_2 & a_3 \\ x_1 & x_2 & x_3 \\ y_1 & y_2 & \end{matrix}\right)
  + \Omega\left(\begin{matrix} a_2 & a_3 & a_1 \\ x_2 & x_3 & x_1 \\ y_2 & y_3 & \end{matrix}\right)
  + \Omega\left(\begin{matrix} a_3 & a_1 & a_2 \\ x_3 & x_1 & x_2 \\ y_3 & y_1 & \end{matrix}\right) \\
  & \qquad
  = \begin{cases}
         -1/4     &\text{ if } (x_1,x_2,x_3)\in(a_1,a_2,a_3)\R+\Z^3, \\
         0        &\text{ otherwise. }
    \end{cases}
\end{align*}
\end{theorem}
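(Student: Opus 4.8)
The plan is to deduce Theorem~\ref{hwzreciprocity} from the more general Theorem~\ref{multivariatereciprocitytheorem} by a perturbation argument that isolates the correction $-1/4$. Write $\beta(u,t):=\sum_{p\ge 0}\frac{\B_p(u)}{p!}\,t^{p-1}$, so that each summand $\Omega\!\left(\cdots\right)$ in Theorem~\ref{hwzreciprocity} equals $\sum_{h\bmod a_k}\prod_{i\ne k}\beta\!\left(a_i\frac{h+x_k}{a_k}-x_i,\ \frac{y_i}{a_i}\right)$, where $k$ indexes the entry serving as the summation modulus. Two facts will be used repeatedly: (i) $\beta(u,t)=\frac{e^{\{u\}t}}{e^t-1}$ is continuous in $u$ off $\Z$; and (ii) at an integer $u=m$ the value $\beta(m,t)=\frac{1}{e^t-1}+\tfrac12$ is exactly the average $\tfrac12\big(\lim_{u\to m^+}+\lim_{u\to m^-}\big)\beta(u,t)$ of its one-sided limits. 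I will also record the elementary equivalence (via $\gcd(a_u,a_v)=1$) that the hypothesis ``$\tfrac{x_u-h_u}{a_u}-\tfrac{x_v-h_v}{a_v}\notin\Z$ for all $h_u,h_v$'' is the same as $a_vx_u-a_ux_v\notin\Z$, and that $(x_1,x_2,x_3)\in(a_1,a_2,a_3)\R+\Z^3$ holds if and only if $a_vx_u-a_ux_v\in\Z$ for all three pairs $u<v$.

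Consider first the ``otherwise'' case, so at least one pair is non-integral. A short congruence computation shows that, for a fixed $k$, the two Bernoulli factors of a single summand $h$ fall on integers \emph{simultaneously} only when all three of $a_vx_u-a_ux_v$ are integers; hence in the present case no summand ever has both factors at integer arguments. Choose a generic direction $v=(v_1,v_2,v_3)$ and replace $x_i$ by $x_i+\epsilon v_i$. For small $\epsilon\ne0$ the perturbed point satisfies the non-integrality hypothesis, so Theorem~\ref{multivariatereciprocitytheorem} gives $\sum_k\Omega=0$; letting $\epsilon\to0^{\pm}$, both one-sided limits of the cyclic sum vanish. Since at most one factor of any summand sits at an integer, property (ii) (applied to that factor, times the continuous other factor) shows each summand equals the average of its one-sided limits, whence $\sum_k\Omega=\tfrac12\big(\lim_{0^+}+\lim_{0^-}\big)=0$, as claimed.

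Now the aligned case $(x_1,x_2,x_3)=(a_1,a_2,a_3)t+(n_1,n_2,n_3)$, where all three pairs are integral. Here, for each $k$, exactly one residue $h$ makes both factors of the $k$-th summand integer-valued, every other summand being continuous at the point. Perturbing along a generic line $x_i+\epsilon v_i$ again forces the cyclic sum to vanish for $\epsilon\ne0$ by Theorem~\ref{multivariatereciprocitytheorem}, so the average of its one-sided limits is $0$; the value at $\epsilon=0$ therefore equals $\sum_k\delta_k$, where $\delta_k$ is the defect, at the distinguished residue, between the integer value $\big(\tfrac{1}{e^{t_1}-1}+\tfrac12\big)\big(\tfrac{1}{e^{t_2}-1}+\tfrac12\big)$ of the product (here $t_1,t_2$ are the two values $y_i/a_i$) and the average of the two one-sided limits of that product as $\epsilon\to0^{\pm}$. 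A direct expansion gives $\delta_k=-\tfrac14$ when the two arguments cross their integers on the same side and $\delta_k=+\tfrac14$ on opposite sides; writing $w_{ij}:=a_jv_i-a_iv_j$, this is $\delta_k=-\tfrac14\,\sign(w_{13}w_{23})$ for the modulus-$a_3$ term, with analogous expressions for the other two.

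The final, and main, step is to verify $\sum_k\delta_k=-\tfrac14$ independently of the generic $v$. The three signs are governed by $w_{12},w_{13},w_{23}$, which satisfy the single relation $a_1w_{23}+a_3w_{12}=a_2w_{13}$ with positive coefficients; this rules out exactly the sign patterns $(\sign w_{12},\sign w_{13},\sign w_{23})=(+,-,+)$ and $(-,+,-)$. A case check over the six remaining patterns shows the relevant sign combination is constant, so the three defects always total $-\tfrac14$. The obstacle to watch is precisely this bookkeeping: one must pair each summand's distinguished residue with the correct crossing directions and invoke the coprimality relation among the $w_{ij}$ to exclude the offending patterns. Everything else is the routine generating-function translation of $\Omega$ and the averaging property (ii) of $\beta$.
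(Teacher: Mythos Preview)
Your argument is correct and takes a genuinely different route from the paper's proof in Section~\ref{hwzsection}. The paper proceeds by direct computation: it first applies Raabe's formula (as in \eqref{beta identity}) to symmetrize each $\Omega$ into a sum over $(h_1,h_2,h_3)$ of products of $\beta$'s, then splits into the three cases according to how many of the integrality conditions $a_vx_u-a_ux_v\in\Z$ hold. In the aligned case~(i) it isolates the single term $h_1=h_2=h_3=0$, shows the remaining terms vanish by the mechanism of Section~\ref{proofsection}, and evaluates the surviving term via the cotangent addition formula to obtain $-\tfrac14$. In the intermediate case~(ii) it carries out an explicit numerator computation that collapses to zero.

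Your perturbation argument instead pushes everything back to the generic case of Theorem~\ref{multivariatereciprocitytheorem} and reads off the answer from the jump behaviour of $\beta$ at integer arguments. The observation that in the ``otherwise'' case no summand ever has \emph{both} $\beta$-factors at an integer (so the averaging property of $\beta$ applies term by term) neatly unifies the paper's cases~(ii) and~(iii). In the aligned case your computation of the defect $\delta_k=-\tfrac14\,\sign(c_1c_2)$ is correct; with your convention $w_{ij}=a_jv_i-a_iv_j$ one finds $\delta_1=-\tfrac14\sign(w_{12}w_{13})$, $\delta_2=+\tfrac14\sign(w_{12}w_{23})$, $\delta_3=-\tfrac14\sign(w_{13}w_{23})$, and the linear relation $a_1w_{23}+a_3w_{12}=a_2w_{13}$ indeed excludes precisely the two sign patterns $(\pm,\mp,\pm)$, after which all six admissible patterns give $\sum_k\delta_k=-\tfrac14$. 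This is more conceptual than the paper's cotangent identity (it explains the constant $-\tfrac14$ as a universal jump defect of a product of two $\beta$'s), at the cost of the sign bookkeeping you flag. Either approach works; yours is arguably cleaner for the ``otherwise'' case, while the paper's is more explicit in the aligned case.
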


As noted elsewhere, the statement of this theorem in \cite{hallwilsonzagier} missed the minus sign in front of $\frac 1 4$ (their proof is correct nevertheless).
We should also remark that it is a somewhat nontrivial (but fun) exercise to derive Dedekind's Theorem \ref{dedreclaw} from Hall--Wilson--Zagier's Theorem~\ref{hwzreciprocity}.

It is the generic (``otherwise") case of Theorem \ref{hwzreciprocity} that our Theorem \ref{multivariatereciprocitytheorem} extends.
The other case (which essentially deals with Bernoulli--Dededekind sums for which $x_1 = x_2 = \dots = x_n = 0$) was recently extended by Bayad and Raouj \cite{bayadraouj}. 
For Theorem \ref{hwzreciprocity}, there are essentially only these two ``extreme" cases; however, for higher-dimensional Bernoulli--Dedekind sums, there are more intermediate cases (in which we have a partial linear relation such as $(x_1,x_2,x_3)\in(a_1,a_2,a_3)\R+\Z^3$), and it is not clear to us how one can easily deal with them. We address this issue in Section~\ref{finalsection}.

It is worth mentioning Hu's thesis \cite{huthesis} which contains another variant of a `multidimensional' Hall--Wilson--Zagier sum. Hu's reciprocity theorem is equivalent to Theorem \ref{hwzreciprocity} for $n=3$ but Hu's generalized Dedekind sums and their reciprocity theorems become different from our Bernoulli--Dedekind sums for $n \ge 4$.

Finally, we note that both Theorems \ref{multivariatereciprocitytheorem} and \ref{hwzreciprocity} can be implicitly seen in the work of Sczech \cite{sczechellipticdedekind,sczecheisensteincocycles}. 
We consider our main contribution as giving a unifying picture and a simple combinatorial reciprocity proof for Bernoulli--Dedekind sums.

%%%%%%%%%%%%%%%%%%%%%%%%%%%%%%%%%%%%%%%%%%%%%%%%%%%%%%%%%%%%%%%%%%%%%%%%%%%

\section{Proof of Theorem \ref{multivariatereciprocitytheorem}}\label{proofsection}

We start our journey towards a proof of Theorem \ref{multivariatereciprocitytheorem} with the following lemma on fractional parts, whose easy proof is left to the reader.

\begin{lemma}\label{posdif}
Given $a,b,c\in \R$,
\begin{align*}
\{a-b\}-\{a-c\} \ge 0 \quad &\Rightarrow \quad \{a-b\}-\{a-c\}=\{ c-b\} \\
\{a-b\}-\{a-c\} \le 0 \quad &\Rightarrow \quad \{a-b\}-\{a-c\}=-\{ b-c\} \, .
\end{align*}
\end{lemma}

Almost as easy is the proof of the following well-known lemma \cite{raabe}.

\begin{lemma}[Raabe's formula]\label{Raabe's formula}
For $a\in \N$, $x\in \R$,
\[
\sum_{h \bmod{a}}\B_m\left(x+\frac{h}{a}\right)=a^{1-m} \ \B_m(ax) \, .
\]
\end{lemma}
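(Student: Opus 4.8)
The plan is to first prove the classical distribution (multiplication) formula for the \emph{ordinary} Bernoulli polynomials $B_m$ by a generating-function manipulation, and then promote it to the periodized functions $\B_m$ through a short periodicity argument. The point of separating these two steps is that the generating function characterizes the polynomials $B_m$, not their periodized counterparts, so the periodization must be installed by hand at the end.

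For the polynomial version, I would sum the defining generating function over a complete residue system. Using the geometric-series identity $\sum_{h=0}^{a-1} e^{hz/a} = \frac{e^z-1}{e^{z/a}-1}$, one finds
\[
  \sum_{h=0}^{a-1} \frac{e^{(x+h/a)z}}{e^z-1}
  = \frac{e^{xz}}{e^z-1}\sum_{h=0}^{a-1} e^{hz/a}
  = \frac{e^{xz}}{e^{z/a}-1} \, .
\]
Substituting $z=aw$ turns the right-hand side into $\frac{e^{(ax)w}}{e^w-1} = \sum_{k\ge 0}\frac{B_k(ax)}{k!}w^{k-1}$, while the left-hand side becomes $\sum_{k\ge0}\frac{a^{k-1}w^{k-1}}{k!}\sum_{h=0}^{a-1}B_k(x+h/a)$. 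Comparing the coefficients of $w^{k-1}$ yields $\sum_{h=0}^{a-1}B_m(x+h/a)=a^{1-m}B_m(ax)$ for every $m\ge0$.

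To pass to $\B_m$, I would observe that both sides of the claimed identity, viewed as functions of $x$, are invariant under $x\mapsto x+\frac1a$: the left-hand sum merely permutes its terms cyclically by periodicity of $\B_m$, and $\B_m(ax)$ is unchanged since $ax\mapsto ax+1$. Hence it suffices to verify the identity on a single period, say $x\in[0,\frac1a)$. For $x\in(0,\frac1a)$ all arguments $x+\frac{h}{a}$ ($h=0,\dots,a-1$) and $ax$ lie in $(0,1)$, so each $\B$ agrees with the corresponding ordinary $B$ and the periodized identity reduces to the polynomial one just proved.

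The only point needing care is the value at $x\in\frac1a\Z$, where the $m=1$ convention $\B_1(u)=0$ for $u\in\Z$ comes into play; for $m\ge2$ the continuity of $\B_m$ closes the gap immediately. By periodicity it is enough to check $x=0$, where a direct computation gives $\sum_{h=0}^{a-1}\B_1(h/a)=\B_1(0)+\sum_{h=1}^{a-1}\bigl(\tfrac{h}{a}-\tfrac12\bigr)=0=\B_1(0)$, matching the right-hand side. This boundary case for $m=1$ is the main (and only mild) obstacle; everything else is a formal manipulation of the generating function together with the period-$\frac1a$ symmetry.
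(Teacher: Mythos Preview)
Your argument is correct. The generating-function computation cleanly yields the polynomial identity $\sum_{h=0}^{a-1} B_m(x+h/a)=a^{1-m}B_m(ax)$, and your periodicity reduction together with the explicit check of the $m=1$, $x\in\frac1a\Z$ boundary case correctly promotes it to the periodized functions~$\B_m$.

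As for comparison: the paper does not actually prove this lemma. It simply records it as ``well-known,'' cites Raabe's original paper, and moves on. So there is no alternative argument in the paper to weigh yours against; you have supplied a full proof where the paper deliberately omits one. Your approach (generating function for the polynomial version, then periodization with a separate $m=1$ endpoint check) is the standard route and is entirely appropriate here.
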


Consequently, we can manipulate a Bernoulli--Dedekind sum as follows.

\begin{align}
  S_{P_{k}}\left(\begin{matrix} A_k & a_k \\ X_k & x_k \end{matrix} \right) \ \prod_{\substack{j=1\\ j\not=k}}^n a_j^{1-m_j} 
  &= \sum_{h \bmod{a_k}} \ \prod_{\substack{j=1\\ j\not=k}}^{n} \ \B_{p_j}\left(a_j \frac{h+x_k}{a_k}-x_j\right) a_j^{1-m_j} \notag \\
  &=\sum_{ \substack{ h_1 \bmod{a_1} \\ \cdots \\ h_n \bmod{a_n} } } \ \prod_{\substack{j=1\\ j\not=k}}^{n} \ \B_{p_j}\left(\frac{x_k+h_k}{a_k}-\frac{x_j+h_j}{a_j}\right),\label{bdsumusingraabe}
\end{align}
where the sum includes the original summand $h$ we now call $h_k$.
Introducing the short-hand $r_j:=\frac{x_j+h_j}{a_j}$, \eqref{bdsumusingraabe} gives
\begin{align}
  \Omega \left(\begin{matrix} A_k & a_k \\ X_k & x_k \\ Y_k & \end{matrix} \right)
  &= \sum_{ \left( p_1, \dots, \widehat{p_k}, \dots, p_n \right) \in \Z_{ \ge 0 }^{ n-1 } } \frac{1}{p_1!p_2!\cdots p_{k-1}!p_{k+1}!\cdots p_n!} \ S_{P_{k}}\left(\begin{matrix} A_k & a_k \\ X_k & x_k \end{matrix} \right)\prod_{\substack{j=1\\ j\not=k}}^n \left(\frac{y_j}{a_j}\right)^{p_j-1} \notag \\
  &= \sum_{ \substack{ h_1 \bmod{a_1} \\ \cdots \\ h_n \bmod{a_n} } } \ \sum_{ \left( p_1, \dots, \widehat{p_k}, \dots, p_n \right) \in \Z_{ \ge 0 }^{ n-1 } } \frac{1}{p_1!p_2!\cdots p_{k-1}!p_{k+1}!\cdots p_n!}\ \prod_{\substack{j=1\\ j\not=k}}^{n} \ \B_{p_j}\left(r_k-r_j\right)\, y_{i}^{p_j-1} \notag \\
  &= \sum_{ \substack{ h_1 \bmod{a_1} \\ \cdots \\ h_n \bmod{a_n} } } \ \prod_{\substack{j=1\\ j\not=k}}^{n}\ \beta\left(r_k-r_j,y_j\right),\label{beta identity}
\end{align}
where
%\begin{equation}\label{betadef}
\[
  \beta(u,z)
  :=\sum_{k\geq 0}\frac{\B_k(u)}{k!} \, z^{k-1} .
\]
Note that
\begin{equation}\label{betalemm}
  \beta(u,z)=
  \begin{cases}
    \frac{1}{2} \, \frac{e^z+1}{e^z-1} &\text{ if } u \in \Z , \\
    \frac{e^{\{ u \} z}}{e^z-1} &\text{ if } u \notin \Z ,
  \end{cases}  
\end{equation}
and so it is clear that (\ref{beta identity}) depends on the differences $r_k-r_j$, and the $\beta\left(r_k-r_j,y_j\right)$ depend on whether or not these differences are integers.
This is the reason for our crucial assumption that $\frac{x_u-h_u}{a_u}-\frac{x_v-h_v}{a_v}\not\in\Z$ whenever $1\leq u<v \leq n$ and $h_u,h_v\in\Z$ in Theorem \ref{multivariatereciprocitytheorem}: it allows us to use the second case of \eqref{betalemm} throughout.
For the rest of this section, we assume the differences $r_k-r_j$ are not integers.
Then
\begin{align*}
  \sum_{k=1}^{n}\Omega\left(\begin{matrix} A_k & a_k \\ X_k & x_k \\ Y_k & \end{matrix} \right)
  &= \sum_{k=1}^{n} \ \sum_{ \substack{ h_1 \bmod{a_1} \\ \cdots \\ h_n \bmod{a_n} } } \ \prod_{\substack{j=1\\ j\not=k}}^{n} \ \beta\left(r_k-r_j,y_j\right)
  = \sum_{ \substack{ h_1 \bmod{a_1} \\ \cdots \\ h_n \bmod{a_n} } } \sum_{k=1}^{n} \ \prod_{\substack{j=1\\ j \ne k}}^{n}\frac{e^{\{r_k-r_j\}y_j}}{e^{y_j}-1} \, \frac{e^{y_k}-1}{e^{y_k}-1} \\
  &= \sum_{ \substack{ h_1 \bmod{a_1} \\ \cdots \\ h_n \bmod{a_n} } } \frac{ \sum_{k=1}^{n} e^{ y_k } \prod_{\substack{j=1\\ j \ne k}}^{n}e^{\{r_k-r_j\}y_j} - \sum_{k=1}^{n} \prod_{\substack{j=1\\ j \ne k}}^{n}e^{\{r_k-r_j\}y_j} }{\prod_{j=1}^{n}\left(e^{y_j}-1\right)} \, .
\end{align*}
Theorem \ref{multivariatereciprocitytheorem} will be proved once we can show that the numerator 
\[
  N := \sum_{k=1}^{n} e^{ y_k } \prod_{\substack{j=1\\ j \ne k}}^{n}e^{\{r_k-r_j\}y_j} - \sum_{k=1}^{n} \prod_{\substack{j=1\\ j \ne k}}^{n}e^{\{r_k-r_j\}y_j}
\]
in this expression vanishes.
We separate the $k=n$ terms and use the assumption $y_1+y_2+\cdots+y_n=0$:
\begin{align*}
  N &= \sum_{k=1}^{n-1}e^{\{r_k-r_n\}y_n+y_k}\prod_{\substack{j=1\\ j \ne k}}^{n-1}e^{\{r_k-r_j\}y_j}-\sum_{k=1}^{n-1}e^{\{r_k-r_n\}y_n}\prod_{\substack{j=1\\ j \ne k}}^{n-1}e^{\{r_k-r_j\}y_j} + e^{ y_n } \prod_{j=1}^{n-1}e^{\{r_n-r_j\} y_j } - \prod_{j=1}^{n-1}e^{\{r_n-r_j\}y_j}\notag\\
  &= \sum_{k=1}^{n-1}e^{\{r_k-r_n\}\left(-y_1-\cdots-y_{n-1}\right)+y_k}\prod_{\substack{j=1\\ j \ne k}}^{n-1}e^{\{r_k-r_j\}y_j} - \sum_{k=1}^{n-1}e^{\{r_k-r_n\}\left(-y_1-\cdots-y_{n-1}\right)}\prod_{\substack{j=1\\ j \ne k}}^{n-1} e^{\{r_k-r_j\}y_j} \\
  &\qquad + e^{ -y_1-\cdots-y_{n-1} } \prod_{j=1}^{n-1}e^{\{r_n-r_j\}y_j } - \prod_{j=1}^{n-1}e^{\{r_n-r_j\}y_j}.\notag
\end{align*}
Note that $1-\{r_k-r_n\}=\{r_n-r_k\}$ since $r_k-r_n\not\in \Z$. Thus
\begin{align}
  N &= \sum_{k=1}^{n-1}e^{\{r_n-r_k\}y_k}\prod_{\substack{j=1\\ j \ne k}}^{n-1}e^{\left(\{r_k-r_j\}-\{r_k-r_n\}\right)y_j} - \sum_{k=1}^{n-1}e^{-\{r_k-r_n\}y_k}\prod_{\substack{j=1\\ j \ne k}}^{n-1}e^{\left(\{r_k-r_j\}-\{r_k-r_n\}\right)y_j} \label{numerator1} \\
  &\qquad + \prod_{j=1}^{n-1}e^{-\{r_j-r_n\}y_j}  - \prod_{j=1}^{n-1}e^{\{r_n-r_j\}y_j}. \notag
\end{align}
\def\C{\sigma}
We will show that we can find identical pairs of exponents in terms with opposite signs in this expression, and so the sum vanishes. 
Only three types of exponents appear in (\ref{numerator1}):
\[
  \{r_n-r_k\} y_k \, , \qquad -\{r_k-r_n\} y_k \, , \qquad \text{ and } \qquad \left( \{r_k-r_j\}-\{r_k-r_n\} \right) y_j \, .
\]
By Lemma \ref{posdif}, $\left( \{r_k-r_j\}-\{r_k-r_n\} \right) y_j$ equals $\{r_n-r_j\} y_j$ or $-\{r_j-r_n\} y_j$, and so the exponents can be condensed to just the first two types.
Moreover, the sign of $\{r_k-r_j\}-\{r_k-r_n\}$ determines if it is equal to $\{r_n-r_j\}$ or  $-\{r_j-r_n\}$, and so all the information of $N$ can be encoded by a sign matrix: the term $\{r_n-r_k\} y_k$ gets encoded by $+$, the term $-\{r_j-r_n\} y_j$ by $-$, and the term $\left( \{r_k-r_j\}-\{r_k-r_n\} \right) y_j$ by the sign of $\{r_k-r_j\}-\{r_k-r_n\}$, which we denote by $\C_{ kj }$.
For example, the exponent corresponding to $k=1$ in the first sum
\begin{equation}\label{firstsumeq}
  \sum_{k=1}^{n-1}e^{\{r_n-r_k\}y_k}\prod_{\substack{j=1\\ j \ne k}}^{n-1}e^{\left(\{r_k-r_j\}-\{r_k-r_n\}\right)y_j}
\end{equation}
is 
\[
  \{r_n-r_1\}y_1 +\left(\{r_1-r_2\}-\{r_1-r_n\}\right)y_2+\cdots+\left(\{r_1-r_{n-1}\}-\{r_1-r_n\}\right)y_{n-1}
\]
and gets represented by the sign vector $\left(+, \C_{12},\dots,\C_{1, \, n-1}\right)$.
More generally, the $k$th term in \eqref{firstsumeq} is represented by the sign vector $\left( \C_{ k,1 } , \dots, \C_{ k,k-1 } , + , \C_{ k, k+1 } , \dots, \C_{ k, n-1 } \right)$.
Similarly, the $k$th term in the second sum of \eqref{numerator1},
\[
  - \sum_{k=1}^{n-1}e^{-\{r_k-r_n\}y_k}\prod_{\substack{j=1\\ j \ne k}}^{n-1}e^{\left(\{r_k-r_j\}-\{r_k-r_n\}\right)y_j} ,
\]
is represented by the sign vector $\left( \C_{ k,1 } , \dots, \C_{ k,k-1 } , - , \C_{ k, k+1 } , \dots, \C_{ k, n-1 } \right)$.
The two final terms in \eqref{numerator1},
\[
  \prod_{j=1}^{n-1}e^{-\{r_j-r_n\}y_j} - \prod_{j=1}^{n-1}e^{\{r_n-r_j\}y_j} ,
\]
are represented by the respective sign vectors $\left(- ,- ,\dots,-\right)$ and $\left(+, +,\dots,+\right)$.

\def\mp{M_{ \rm pos }}
\def\mn{M_{ \rm neg }}

Let $\mp$, resp.\ $\mn$, be the matrix of all sign vectors representing the exponents of the positive, resp.\ negative, terms of $N$, where we place the sign vector representing, e.g., an exponent from the $k$th positive term in the $k$th row of matrix $\mp$.
Thus we have constructed the matrices
\[
  \mp = \left(\begin{matrix} + & \C_{1 2} & \C_{1 3} & \cdots & \C_{1, n-1}\\ \C_{2 1} & + & \C_{2 3} & \cdots & \C_{2, n-1} \\ \C_{3 1} & \C_{3 2} & + & \cdots & \C_{3, n-1}\\ \vdots & \vdots & \vdots & \ddots & \vdots \\ \C_{n-1, 1} & \C_{n-1, 2} & \C_{n-1, 3} &  \cdots & +\\ - & - & - & \cdots & - \end{matrix}\right)
\]
and
\[
  \mn = \left(\begin{matrix} - & \C_{1 2} & \C_{1 3} & \cdots & \C_{1, n-1}\\ \C_{2 1} & - & \C_{2 3} & \cdots & \C_{2, n-1}  \\ \C_{3 1} & \C_{3 2} & - & \cdots & \C_{3, n-1}\\ \vdots & \vdots & \vdots & \ddots & \vdots \\ \C_{n-1, 1} & \C_{n-1, 2} & \C_{n-1, 3} &  \cdots & -\\ + & + & + & \cdots & + \end{matrix}\right) ,
\]
and our goal $N=0$ will follow from proving that $\mp = \mn$ after row swapping.
To show the latter, we first collect some properties of $\mp$ and $\mn$.

\begin{lemma}\label{lemmapottpori} $ $ 
\begin{enumerate}[{\rm (a)}]
\item\label{diagonalrows} The matrix $\mp$ has $+$ entries on the diagonal and the last row consists entirely of $-$ entries; $\mn$ has $-$ entries on the diagonal and the last row consists entirely of $+$ entries.
\item\label{posequalsneglemma} $\C_{i j}=+$ if and only if $\C_{j i}=-$.
\item\label{matching} If $\C_{i j}=+$ and $\C_{i k}=-$ then $\C_{j k}=-$.
\end{enumerate}
\end{lemma}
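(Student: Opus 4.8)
The plan is to treat the three parts separately, using only Lemma \ref{posdif} together with the elementary reflection identity $\{u\}+\{-u\}=1$, valid for $u\notin\Z$. Throughout I will use that, by the hypothesis of Theorem \ref{multivariatereciprocitytheorem}, all differences $r_i-r_j$ with $i\neq j$ are nonintegers, so each $\C_{ij}=\sign\bigl(\{r_i-r_j\}-\{r_i-r_n\}\bigr)$ is well defined and strictly $\pm$: indeed $\{r_i-r_j\}=\{r_i-r_n\}$ would force $r_j\equiv r_n\pmod 1$, which is excluded.

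Part (a) is immediate from the construction of $\mp$ and $\mn$ preceding the lemma: the $k$th row (for $k\le n-1$) records the exponent of the $k$th term in the first, resp.\ second, sum of \eqref{numerator1}, whose distinguished $k$th entry is $\{r_n-r_k\}y_k$ (encoded $+$), resp.\ $-\{r_k-r_n\}y_k$ (encoded $-$); the last row records the exponent of $\prod_j e^{-\{r_j-r_n\}y_j}$ (all $-$), resp.\ of $\prod_j e^{\{r_n-r_j\}y_j}$ (all $+$).

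For part (b) I would compute $\C_{ji}$ directly. Applying the reflection identity to $\{r_j-r_i\}=1-\{r_i-r_j\}$ and $\{r_j-r_n\}=1-\{r_n-r_j\}$ gives $\{r_j-r_i\}-\{r_j-r_n\}=\{r_n-r_j\}-\{r_i-r_j\}$, so $\C_{ji}=\sign\bigl(\{r_n-r_j\}-\{r_i-r_j\}\bigr)$. Now if $\C_{ij}=+$ then $\{r_i-r_j\}-\{r_i-r_n\}>0$, and Lemma \ref{posdif} identifies this difference as $\{r_n-r_j\}$; substituting yields $\{r_n-r_j\}-\{r_i-r_j\}=-\{r_i-r_n\}<0$, so $\C_{ji}=-$. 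The case $\C_{ij}=-$ is symmetric: Lemma \ref{posdif} gives $\{r_i-r_j\}-\{r_i-r_n\}=-\{r_j-r_n\}$, whence $\{r_n-r_j\}=1-\{r_j-r_n\}=1+\{r_i-r_j\}-\{r_i-r_n\}$ and therefore $\{r_n-r_j\}-\{r_i-r_j\}=1-\{r_i-r_n\}>0$, giving $\C_{ji}=+$.

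Part (c) is where the bookkeeping is most delicate, and I expect it to be the main obstacle. From $\C_{ij}=+$ and $\C_{ik}=-$ I read off the chain $\{r_i-r_k\}<\{r_i-r_n\}<\{r_i-r_j\}$. Two applications of Lemma \ref{posdif} with base point $r_i$ then express the relevant fractional parts through those based at $r_i$: from $\{r_i-r_k\}<\{r_i-r_j\}$ I get $\{r_k-r_j\}=\{r_i-r_j\}-\{r_i-r_k\}$, hence (by reflection) $\{r_j-r_k\}=1-\{r_i-r_j\}+\{r_i-r_k\}$; and from $\{r_i-r_n\}<\{r_i-r_j\}$ I get $\{r_n-r_j\}=\{r_i-r_j\}-\{r_i-r_n\}$, hence $\{r_j-r_n\}=1-\{r_i-r_j\}+\{r_i-r_n\}$. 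Subtracting, the terms $1-\{r_i-r_j\}$ cancel and $\{r_j-r_k\}-\{r_j-r_n\}=\{r_i-r_k\}-\{r_i-r_n\}<0$, so $\C_{jk}=-$, as required. The only real care needed is in selecting the correct case of Lemma \ref{posdif} each time (dictated by the inequalities just derived) and in keeping the reflections $\{-u\}=1-\{u\}$ straight; once the right substitutions are lined up, the cancellation is automatic.
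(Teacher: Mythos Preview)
Your proof is correct and follows essentially the same approach as the paper: part~(a) is by construction, part~(b) via Lemma~\ref{posdif} and the reflection identity $\{u\}+\{-u\}=1$, and part~(c) by rewriting the relevant fractional parts in terms of $\{r_i-\,\cdot\,\}$ using Lemma~\ref{posdif} and then reading off the sign. Your computation in~(c) is in fact a bit more streamlined than the paper's---you express $\{r_j-r_k\}$ and $\{r_j-r_n\}$ directly via reflection and subtract to obtain $\{r_j-r_k\}-\{r_j-r_n\}=\{r_i-r_k\}-\{r_i-r_n\}$, whereas the paper takes an extra detour through $\{r_k-r_j\}=\{r_n-r_j\}+\{r_k-r_n\}$ before applying reflection---but the ingredients and the logic are the same.
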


\begin{proof}
(\ref{diagonalrows}) follows by construction.

\vspace{5pt}
\noindent
(\ref{posequalsneglemma}) follows with Lemma \ref{posdif}.

\vspace{5pt}
\noindent
(\ref{matching}) Assume $\C_{i j}=+$ and $\C_{i k}=-$. Then $\{r_i-r_j\}-\{r_i-r_n\}>0$ and $\{r_i-r_k\}-\{r_i-r_n\}<0$, and by Lemma \ref{posdif}
\begin{equation}\label{dif1}
  \{r_i-r_j\}-\{r_i-r_n\}=\{r_n-r_j\}
\end{equation}
and
\begin{equation}\label{dif2}
  \{r_i-r_k\}-\{r_i-r_n\}=-\{r_k-r_i\}.
\end{equation}
Then the difference (\ref{dif1})$-$(\ref{dif2}) is positive and we get
\[
  \{r_i-r_j\}-\{r_i-r_k\}=\{r_n-r_j\}+\{r_k-r_i\} \, .
\]
The final identity is positive, which means the left-hand side is positive. Then by Lemma \ref{posdif}
\[
  \{r_i-r_j\}-\{r_i-r_k\}=\{r_k-r_j\} \, ,
\]
and so
\[
  \{r_k-r_j\}=\{r_n-r_j\}+\{r_k-r_i\}.
\]
But then $\C_{j k}=-$ follows from
\begin{align*}
\{r_j-r_k\}-\{r_j-r_n\}&=\{r_j-r_k\}-1+1-\{r_j-r_n\} \\
&=-\{r_k-r_j\}+\{r_n-r_j\} \\
&=-\{r_k-r_i\} \, . \qedhere
\end{align*}
\end{proof}

Part (\ref{posequalsneglemma}) of this lemma allows us to update the sign matrices:
\[
  \mp=\left(\begin{matrix} + & \C_{1 2} & \C_{1 3} & \cdots & \C_{1, n-1}\\ -\C_{1 2} & + & \C_{2 3} & \cdots & \C_{2, n-1} \\ -\C_{1 3} & -\C_{2 3} & + & \cdots & \C_{3, n-1}\\ \vdots & \vdots & \vdots & \ddots & \vdots \\-\C_{1, n-1} & -\C_{2, n-1} & -\C_{3, n-1} &  \cdots & +\\ - & - & - & \cdots & - \end{matrix}\right)
\]
and
\[
  \mn=\left(\begin{matrix} - & \C_{1 2} & \C_{1 3} & \cdots & \C_{1, n-1}\\ -\C_{1 2} & - & \C_{2 3} & \cdots & \C_{2, n-1}  \\ -\C_{1 3} & -\C_{2 3} & - & \cdots & \C_{3, n-1}\\ \vdots & \vdots & \vdots & \ddots & \vdots \\-\C_{1, n-1} & -\C_{2, n-1} & -\C_{3, n-1} &  \cdots & -\\ + & + & + & \cdots & + \end{matrix}\right) .
\]

\begin{lemma}\label{uniquerows}
Each of the matrices $\mp$ and $\mn$ has a unique row with $k$ $+$'s, for each $0\leq  k \leq n-1$.
\end{lemma}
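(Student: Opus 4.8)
The plan is to read the off-diagonal sign entries as encoding a \emph{tournament} and to exploit that Lemma~\ref{lemmapottpori} forces this tournament to be transitive. Concretely, on the vertex set $\{1,2,\dots,n-1\}$ I would declare a directed edge $i\to j$ precisely when $\C_{ij}=+$. Part~(\ref{posequalsneglemma}) of Lemma~\ref{lemmapottpori} guarantees that exactly one of $\C_{ij},\C_{ji}$ is $+$, so this really is a tournament (every unordered pair receives exactly one oriented edge). Part~(\ref{matching}) says that $\C_{ij}=+$ and $\C_{ik}=-$ force $\C_{jk}=-$; rewritten through edges, $k\to i$ and $i\to j$ force $k\to j$, which is exactly transitivity of the relation $\to$.

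Next I would record the standard fact that a transitive tournament on $m$ vertices is a linear order, so its out-degrees are $0,1,\dots,m-1$, each attained by exactly one vertex (the vertex with out-degree $d$ is the one that beats exactly $d$ others). Applying this with $m=n-1$, the quantities $\deg^+(i):=|\{j\ne i:\C_{ij}=+\}|$ for $1\le i\le n-1$ form a permutation of $\{0,1,\dots,n-2\}$.

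It then remains to translate out-degrees into the $+$-counts of rows, keeping careful track of the diagonal entries and of the exceptional last row. In $\mp$ the $k$th row (for $k\le n-1$) carries a $+$ on the diagonal together with $\deg^+(k)$ further $+$'s, so it has $1+\deg^+(k)$ plusses; as $k$ ranges over $1,\dots,n-1$ these counts run over $\{1,\dots,n-1\}$, while the all-$-$ last row contributes a single row with $0$ plusses. Hence the plus-counts of the $n$ rows of $\mp$ are $0,1,\dots,n-1$, each occurring exactly once. For $\mn$ the diagonal entries are instead $-$, so the $k$th row has exactly $\deg^+(k)$ plusses, covering $\{0,\dots,n-2\}$, and the all-$+$ last row supplies the value $n-1$; again every value $0\le k\le n-1$ occurs precisely once. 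I expect the only genuine subtlety to lie in this last bookkeeping step---cleanly separating the diagonal contribution from the off-diagonal one and handling the special last row---whereas the conceptual heart, recognizing parts (\ref{posequalsneglemma}) and (\ref{matching}) as ``tournament plus transitivity,'' is exactly what makes the out-degrees, and hence the row sums, automatically distinct.
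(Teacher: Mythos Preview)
Your proposal is correct and rests on the same two ingredients the paper uses---parts~(\ref{posequalsneglemma}) and~(\ref{matching}) of Lemma~\ref{lemmapottpori}---but the packaging is different and, arguably, more transparent. The paper argues directly: it first checks that two distinct rows of $\mp$ cannot be equal (comparing the diagonal entries in columns $m$ and $l$ forces $\C_{ml}=+$ and $-\C_{ml}=+$), and then shows that if rows $m$ and $l$ had the same number of $+$'s with $\C_{ml}=+$, every $-$ in row $m$ would, via Lemma~\ref{lemmapottpori}(\ref{matching}), force a $-$ in the same column of row $l$, while row $l$ has the extra $-$ in column $m$; this contradicts equality of $+$-counts. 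Your tournament reformulation compresses exactly this reasoning: antisymmetry is part~(\ref{posequalsneglemma}), transitivity is part~(\ref{matching}), and the paper's ``every $-$ in row $m$ propagates to row $l$'' step is precisely the out-degree monotonicity of a linear order. What your framing buys is that the distinctness of $+$-counts becomes a one-line citation (out-degrees of a transitive tournament on $n-1$ vertices are $0,1,\dots,n-2$) rather than an ad hoc counting, and the separate ``rows are distinct'' check becomes superfluous. The final bookkeeping---adding the diagonal $+$ in $\mp$, not adding it in $\mn$, and handling the last row---is done correctly in your proposal and matches what the paper implicitly needs.
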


\begin{proof}
We will prove this for $\mp$; the statement for $\mn$ follows then immediately.

We begin by showing that every row of the matrix $\mp$ is unique. Suppose on the contrary that row $m$ and row $l$ of $\mp$ are equal. Then these rows look as follows:
\[
  \begin{matrix}\text{ row $m$:}\hspace{.4cm}& -\C_{1 m} & -\C_{2 m} & \cdots & + & \cdots & \C_{m l} & \cdots & \C_{m, n-1}\\
          	      \text{ row $l$:}\hspace{.4cm}  & -\C_{1 l} & -\C_{2 l} & \cdots & -\C_{m l} &\cdots & + & \cdots & \C_{l, n-1} \, .
  \end{matrix}
\]
Then $\C_{m l}=+$ and $-\C_{m l}=+$, which contradicts Lemma \ref{lemmapottpori}(\ref{posequalsneglemma}).

Next we will show that no two rows contain the same number of $+$'s. Suppose on the contrary that row $m$ and row $l$ of $\mp$ contain the same number of $+$'s (and are not equal). 

Assume $\C_{m l}=+$. Since the $m$th row does not entirely consist of $+$'s, there exists a $-$ in column, say, $k$. Then by Lemma \ref{lemmapottpori}(\ref{matching}), the entry $\C_{k l}$ is $-$. So, for every $-$ in row $m$, Lemma \ref{lemmapottpori}(\ref{matching}) can be applied to show there is a $-$ in the same column entry of row $l$. But $-\C_{m l}=-$, and so row $l$ contains at least one more $-$ than row $m$, a contradiction.
If, on the other hand, $\C_{m l}=-$, then we can repeat the above argument for row $l$, starting with the entry $-\C_{m l}=+$. % Thus, no two rows can have an equal number of $+$'s.

We have shown that no two rows contain the same number of $+$'s and so, for each $0\leq  k \leq n-1$, there exists a unique row with $k$ $+$'s.
\end{proof}

Lemma \ref{uniquerows} allows us to match up the unique rows of $k$ $+$'s in $\mp$ and $\mn$. Thus, after row swapping, $\mp = \mn$, which, by our previous argument, finally proves Theorem~\ref{multivariatereciprocitytheorem}.

%%%%%%%%%%%%%%%%%%%%%%%%%%%%%%%%%%%%%%%%%%%%%%%%%%%%%%%%%%%%%%%%%%%%%%%%%%%

\section{Petersson--Knopp Identities}\label{petknoppsection}

Another basic identity on the classical Dedekind sum is the following \cite{knoppdedekind}.
\begin{theorem}[Petersson--Knopp]\label{petknopp} Suppose $a,b \in \N$ are coprime. Then
\[
  \sum_{ d|m } \ \sum_{ k \mbox{ \rm \scriptsize mod } d } \s \left( \frac{ m }{ d } b + ka , ad \right) = \sigma (m) \, \s ( b,a ) \, .
\]
Here $ \sigma (m) $ denotes the sum of the positive divisors of $m$.
\end{theorem}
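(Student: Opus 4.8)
The plan is to prove Theorem~\ref{petknopp} entirely within the sawtooth representation~\eqref{dedsaw}, letting the factor $\sigma(m)$ emerge from a M\"obius inversion combined with the elementary identity $\sum_{d \mid N} d = \sigma(N)$. Two facts will do all the work. The first is the distribution relation for the sawtooth function, obtained by specializing Raabe's formula (Lemma~\ref{Raabe's formula}) to the first periodized Bernoulli polynomial (recall $\B_1(u) = \saw{u}$), namely $\saw{ax} = \sum_{h \bmod a}\saw{x + \tfrac{h}{a}}$. The second is the scaling invariance $\s(ga, gb) = \s(a,b)$ for $g \in \N$, which follows from that same distribution relation by grouping the summation index in~\eqref{dedsaw} according to its residue modulo $b$.

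First I would expand each inner Dedekind sum by~\eqref{dedsaw} and split its second argument as $\tfrac{j}{ad}\bigl(\tfrac{m}{d} b + ka\bigr) = \tfrac{j(m/d)b}{ad} + \tfrac{jk}{d}$. Carrying out the sum over $k \bmod d$ first and writing $g := \gcd(j,d)$, the distribution relation collapses it to $g\,\saw{\tfrac{(j/g)(m/d)b}{a}}$, so that the left-hand side of Theorem~\ref{petknopp} becomes a $\gcd$-weighted sum over $d \mid m$ and $j \bmod ad$. I would then reindex by the ordered factorizations $m = g\,D\,e$, with $D = d/g$, $e = m/d$, and $j = g\ell$ where $\gcd(\ell, D) = 1$, turning the expression into $\sum_{gDe = m} g\,V(D,e)$, where
\[
  V(D,e) := \sum_{\substack{\ell \bmod aD \\ \gcd(\ell,D) = 1}} \saw{\tfrac{\ell}{aD}}\,\saw{\tfrac{\ell e b}{a}} .
\]
The key step is then to M\"obius-sieve the coprimality condition: writing $\mathbbm{1}[\gcd(\ell,D)=1] = \sum_{\delta \mid D}\mu(\delta)\,\mathbbm{1}[\delta \mid \ell]$ and applying the scaling invariance to the resulting unrestricted inner sums should give the clean formula $V(D,e) = \sum_{\delta \mid D}\mu(\delta)\,\s(\delta e b, a)$.

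The finish, and the point at which one must be careful, is the reassembly. Substituting this formula and writing $D = \delta D''$ turns the left-hand side into $\sum g\,\mu(\delta)\,\s(\delta e b, a)$ summed over all $g, \delta, D'', e$ with $g\,\delta\,D''\,e = m$; summing the weights first gives $\sum_{gD'' = m/(\delta e)} g = \sigma\!\bigl(\tfrac{m}{\delta e}\bigr)$. The crucial observation is that $\s(\delta e b, a)$ depends on $\delta$ and $e$ only through the product $f := \delta e$, so grouping by $f \mid m$ isolates the inner factor $\sum_{\delta \mid f}\mu(\delta) = \mathbbm{1}[f=1]$, which annihilates every term except $f = 1$ and leaves exactly $\sigma(m)\,\s(b,a)$. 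I expect the main obstacle to be conceptual rather than computational: recognizing that the coprimality constraint in $V(D,e)$ should be removed by M\"obius inversion precisely so that the scaling invariance can flatten each piece to a single Dedekind sum $\s(fb,a)$. A minor loose end to verify is that the degenerate summands, in which some sawtooth argument is an integer and hence vanishes by~\eqref{doublebrackets}, never disturb these manipulations; the hypothesis $\gcd(a,b)=1$ serves to frame the right-hand side as a genuine reduced Dedekind sum.
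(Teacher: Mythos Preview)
The paper does not actually prove Theorem~\ref{petknopp}: it is quoted there as a classical result of Knopp~\cite{knoppdedekind}, and the paper's own contribution in Section~\ref{petknoppsection} is to state the more general Theorem~\ref{petknoppcot} (proved in~\cite{cotangent}) and deduce from it, via Raabe's formula, a Petersson--Knopp identity for Bernoulli--Dedekind sums. So there is no in-paper proof against which to compare your argument line by line.

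That said, your direct proof is correct. The chain --- expand by~\eqref{dedsaw}, sum over $k\bmod d$ via the distribution relation to produce the weight $g=\gcd(j,d)$, reindex into $\sum_{gDe=m} g\,V(D,e)$, remove the coprimality constraint in $V(D,e)$ by M\"obius, flatten each resulting unrestricted sum to $\s(\delta e b,a)$ via the scaling invariance $\s(gc,ga)=\s(c,a)$, and finally collapse using $\sum_{\delta\mid f}\mu(\delta)=[f=1]$ --- goes through cleanly. The step you flagged as most delicate is fine: after writing $\ell=\delta\ell'$ with $\ell'\bmod aD/\delta$, grouping $\ell'$ by its residue $r\bmod a$ makes the second sawtooth constant equal to $\saw{r\delta e b/a}$, and the distribution relation collapses the remaining sum over $\ell'\equiv r$ to $\saw{r/a}$, giving exactly $\s(\delta e b,a)$. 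You are also right that $\gcd(a,b)=1$ is never used; the identity holds for arbitrary $a,b\in\N$ with the definition~\eqref{dedsaw}. Compared with the paper's viewpoint --- which frames Petersson--Knopp as a specialization of the weight machinery of Theorem~\ref{petknoppcot} --- your approach is more hands-on and entirely self-contained within the sawtooth representation, trading the generality of the Dedekind-type-sum framework for a transparent combinatorial argument.
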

This result has been extended to certain generalized Dedekind sums \cite{apostolvu,parsonrosen,zheng} and takes its most general form \cite{cotangent} for sums 
\[ 
  S \left( a; a_{1}, \dots , a_{n} \right) := \sum_{ k \mbox{ \rm \scriptsize mod } a } f_{1} \left( \frac{k a_{1}  }{a } \right) \cdots f_{n} \left( \frac{k a_{n}  }{a } \right)
\]
{\bf of Dedekind type with weight $ \left( m_{1} , \dots , m_{n} \right) $}, i.e., when for all $ j = 1, \dots, n $, $ f_{j} (x+1) = f_{j} (x) $ and for all $ a \in \N$,
\begin{equation}\label{weight}
  \sum_{ k \mbox{ \rm \scriptsize mod } a } f_{j} \left( x + \frac{ k }{ a } \right) = a^{ m_{j} } f_{j} (ax) \ .
\end{equation}
Note that the Bernoulli functions $ \B_{k} (x) $ satisfy (\ref{weight}) (with `weight' $ 1-k $), due to Lemma \ref{Raabe's formula}.
The following extension of Theorem \ref{petknopp} was proved in \cite{cotangent}.
\begin{theorem}\label{petknoppcot} Let $a, a_{1}, \dots , a_{n} \in \N $. If
  \[ S \left( a; a_{1}, \dots , a_{n} \right) := \sum_{ k \mbox{ \rm \scriptsize mod } a } f_{1} \left( \frac{k a_{1}  }{a } \right) \cdots f_{n} \left( \frac{k a_{n}  }{a } \right)  \]
is of Dedekind type with weight $ \left( m_{1} , \dots , m_{n} \right) $ then
\begin{align*}
  &\sum_{ d | m } d^{ - m_{1} - \dots - m_{n} } \! \sum_{ r_{1} , \dots , r_{n} \mbox{ \rm \scriptsize mod } d } \! S \left( ad; \frac{ m }{ d } a_{1} + r_{1} a , \dots , \frac{ m }{ d } a_{n} + r_{n} \, a  \right) \\
  &\qquad = m \, \sigma_{ n - 1 - m_{1} - \dots - m_{n} } (m) \, S \left( a; a_{1}, \dots , a_{n} \right) .
\end{align*}
\end{theorem}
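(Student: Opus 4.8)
The plan is to expand the left-hand side, use the weight property \eqref{weight} to collapse the $n$ summations over $r_1,\dots,r_n$, and thereby reduce the whole identity to a purely arithmetic counting statement that is then settled by a Dirichlet-convolution computation. Write $M:=m_1+\cdots+m_n$, fix a divisor $d\mid m$ and a residue $k\bmod ad$, and set $g:=\gcd(k,d)$. The argument appearing in the $j$th factor is $\frac{k}{ad}\bigl(\frac{m}{d}a_j+r_ja\bigr)=\frac{kma_j}{ad^2}+\frac{kr_j}{d}$, so only the shift $\frac{kr_j}{d}$ depends on $r_j$. As $r_j$ runs over $\Z/d\Z$, the quantity $\frac{kr_j}{d}\bmod 1$ takes each of the values $\frac{t}{d/g}$, $t=0,\dots,\frac dg-1$, exactly $g$ times, so summing the $j$th factor over $r_j$ and applying \eqref{weight} with modulus $d/g$ gives $g\,(d/g)^{m_j}f_j\!\bigl(\frac{(k/g)(m/d)a_j}{a}\bigr)$. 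The crucial point is that $g$ is the same for every $j$; taking the product over $j=1,\dots,n$ and using $d\mid m$ so that $l:=(k/g)(m/d)\in\Z$, the summation over $r_1,\dots,r_n$ produces $g^{\,n}(d/g)^{M}\prod_j f_j\!\bigl(\frac{la_j}{a}\bigr)$. Multiplying by the prefactor $d^{-M}$ collapses the power of $d/g$ and leaves the clean weight $g^{\,n-M}$.

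Carrying this out, the left-hand side becomes $\sum_{d\mid m}\sum_{k\bmod ad}\gcd(k,d)^{\,n-M}\prod_j f_j\!\bigl(\frac{la_j}{a}\bigr)$, and since each $f_j$ has period $1$ and $l$ is an integer, the product depends only on $l\bmod a$. Collecting terms by this residue, the left-hand side equals $\sum_{l_0\bmod a}c(l_0)\prod_{j} f_j\!\bigl(\frac{l_0a_j}{a}\bigr)$, where $c(l_0):=\sum_{d\mid m}\sum_{k}\gcd(k,d)^{\,n-M}$, the inner sum running over all $k\bmod ad$ with $l\equiv l_0\pmod a$. Comparing with the right-hand side $m\,\sigma_{n-1-M}(m)\,S(a;a_1,\dots,a_n)=m\,\sigma_{n-1-M}(m)\sum_{l_0\bmod a}\prod_j f_j\!\bigl(\frac{l_0a_j}{a}\bigr)$, it therefore suffices to prove that $c(l_0)=m\,\sigma_{n-1-M}(m)$ for every $l_0$.

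The total mass is easy: stratifying $k\bmod ad$ by $g=\gcd(k,d)$ yields exactly $a\,\varphi(d/g)$ residues for each $g\mid d$, so $\sum_{l_0\bmod a}c(l_0)=a\sum_{d\mid m}\sum_{g\mid d}g^{\,n-M}\varphi(d/g)=a\,(\mathrm{Id}^{\,n-M}*\varphi*\mathbf 1)(m)$; since $\varphi*\mathbf 1=\mathrm{Id}$ and $(\mathrm{Id}^{\,n-M}*\mathrm{Id})(m)=m\,\sigma_{\,n-M-1}(m)$, this equals $a\cdot m\,\sigma_{n-1-M}(m)$. The remaining, and genuinely harder, point is the uniformity $c(l_0)=c(l_0')$. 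Here is the main obstacle: the distribution of $l=(k/g)(m/d)\bmod a$ is \emph{not} uniform within a single $g$-stratum, so uniformity must come from a global cancellation across strata (one checks in small cases, e.g.\ $a=m=2$, that the nonuniform strata conspire to give equal totals). I would establish it by a roots-of-unity filter: it is enough to show that for every $b\not\equiv 0\pmod a$ the character sum $\sum_{l_0}c(l_0)e^{2\pi i bl_0/a}=\sum_{d\mid m}\sum_{k\bmod ad}\gcd(k,d)^{\,n-M}e^{2\pi i b l/a}$ vanishes, which after the $g$-stratification reduces to the vanishing of restricted, coprimality-constrained exponential sums.

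Alternatively, since every ingredient in $c(l_0)$ is multiplicative, one can invoke the Chinese Remainder Theorem to factor both $m$ and the modulus $a$ into prime powers and verify $c(l_0)=m\,\sigma_{n-1-M}(m)$ directly in the prime-power case; this is routine but tedious, and it confirms that no coprimality hypothesis between $a$ and $m$ is needed. Once uniformity is in hand, combining it with the total-mass computation gives $c(l_0)=m\,\sigma_{n-1-M}(m)$ for each $l_0$, and substituting back recovers exactly $m\,\sigma_{n-1-M}(m)\,S(a;a_1,\dots,a_n)$, completing the proof.
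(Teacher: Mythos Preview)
A meta-point first: the present paper does not give a proof of this theorem. It is quoted from \cite{cotangent} and then applied, so there is no ``paper's own proof'' to compare your argument against.

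On the substance: your reduction is correct through the application of \eqref{weight}, the rewriting of the left-hand side as $\sum_{l_0\bmod a}c(l_0)\prod_j f_j(l_0 a_j/a)$, and the Dirichlet-convolution evaluation of the total mass $\sum_{l_0}c(l_0)=a\,m\,\sigma_{n-1-M}(m)$. The genuine gap is precisely where you place it: you do not prove that $c(l_0)$ is independent of $l_0$. Neither proposed route is actually carried out, and the CRT sketch is problematic as written---$c(l_0)$ is a function of a residue class modulo $a$, not a multiplicative arithmetic function of $m$, and since no coprimality between $a$ and $m$ is assumed it is unclear what ``every ingredient is multiplicative'' is supposed to mean or how one would factor the problem into prime powers.

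The cleanest repair bypasses the uniformity question entirely. After your stratification by $g=\gcd(k,d)$, write $k=gk'$ and $e=d/g$, and reorganise the sum over $(d,g)$ as a sum over $g\mid m$ times an inner sum over $e\mid(m/g)$ and $k'\bmod ae$ with $\gcd(k',e)=1$. Remove the coprimality condition by M\"obius, substitute $e=\delta e'$, and swap the order of the $\delta$- and divisor-sums; the identity $\sum_{\delta\mid p}\mu(\delta)=[p=1]$ then collapses the inner expression to $\sum_{k''\bmod a(m/g)}\prod_j f_j(k''a_j/a)=(m/g)\,S(a;a_1,\dots,a_n)$ by periodicity. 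Summing over $g\mid m$ with weight $g^{\,n-M}$ yields $m\,\sigma_{n-1-M}(m)\,S(a;a_1,\dots,a_n)$ directly. Until some such argument is supplied, your proof is incomplete.
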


This theorem together with Lemma \ref{Raabe's formula} immediately gives a Petersson--Knopp identity for the Bernoulli--Dedekind sums
\[
 S_{p_1, \dots, p_n}\left(\begin{matrix} (a_1, \dots, a_n) & a_0 \\ (0, \dots, 0) & 0 \end{matrix} \right)
 = \sum_{h \bmod{a_0}} \ \prod_{i=1}^n \ \B_{p_i}\left(a_i \frac{h}{a_0} \right).
\]

\begin{corollary}
\begin{align*}
  &\sum_{ d | m } d^{ p_{1} + \dots + p_{n} - n + 1 } \! \sum_{ r_{1} , \dots , r_{n} \mbox{ \rm \scriptsize mod } d } S_{p_1, \dots, p_n} \left(\begin{matrix} (\frac{ m }{ d } a_{1} + r_{1} \, a_0, \dots, \frac{ m }{ d } a_{n} + r_{n} \, a_0) & a_0 \\ (0, \dots, 0) & 0 \end{matrix} \right) \\
  &\qquad = m \, \sigma_{ p_{1} + \dots + p_{n} } (m) \ S_{p_1, \dots, p_n}\left(\begin{matrix} (a_1, \dots, a_n) & a_0 \\ (0, \dots, 0) & 0 \end{matrix} \right) .
\end{align*}
\end{corollary}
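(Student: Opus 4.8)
The plan is to obtain the corollary as a direct specialization of Theorem~\ref{petknoppcot}, so that beyond a change of notation essentially no new work is required. The first step is to recognize that the Bernoulli--Dedekind sum in question is literally a Dedekind-type sum in the sense of that theorem: taking the base modulus to be $a_0$, the inner arguments to be $a_1,\dots,a_n$, and the one-variable functions to be $f_i := \B_{p_i}$, one has
\[
 S_{p_1, \dots, p_n}\left(\begin{matrix} (a_1, \dots, a_n) & a_0 \\ (0, \dots, 0) & 0 \end{matrix} \right)
 = \sum_{k \bmod a_0} f_1\!\left(\tfrac{k a_1}{a_0}\right) \cdots f_n\!\left(\tfrac{k a_n}{a_0}\right)
 = S(a_0; a_1, \dots, a_n).
\]

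Second, I would verify that these functions meet the hypothesis of Theorem~\ref{petknoppcot}, namely that each $f_i = \B_{p_i}$ is of Dedekind type. Periodicity with period $1$ is built into the definition of the periodized Bernoulli polynomial, and the distribution relation \eqref{weight} is exactly Raabe's formula (Lemma~\ref{Raabe's formula}): comparing $\sum_{h \bmod a} \B_{p_i}(x + h/a) = a^{1-p_i}\B_{p_i}(ax)$ with \eqref{weight} identifies the weight of $\B_{p_i}$ as $m_i = 1 - p_i$. This is the key link that makes Theorem~\ref{petknoppcot} applicable, and it is the only place Lemma~\ref{Raabe's formula} enters.

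Third, with the Dedekind-type structure and the weights in hand, I would simply invoke Theorem~\ref{petknoppcot} and translate back. The transformed sums $S\!\left(a_0 d; \frac{m}{d}a_1 + r_1 a_0, \dots, \frac{m}{d}a_n + r_n a_0\right)$ become Bernoulli--Dedekind sums with base modulus $a_0 d$, inner arguments $\frac{m}{d}a_i + r_i a_0$, and all shifts equal to $0$; the summation over $r_1,\dots,r_n \bmod d$ and the outer divisor sum over $d \mid m$ carry over verbatim. The powers of $d$ and the order of $\sigma$ on the right then follow by substituting $m_i = 1 - p_i$ into the exponent $-(m_1+\cdots+m_n)$ and the order $n-1-(m_1+\cdots+m_n)$ appearing in Theorem~\ref{petknoppcot}.

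The main (and really the only) point requiring care is the bookkeeping: correctly tracking the weight $m_i = 1 - p_i$ through the exponent of $d$ and the subscript of $\sigma_{\cdot}(m)$, and matching the modulus and argument data to the Bernoulli--Dedekind notation. There is no genuine mathematical obstacle here, since Theorem~\ref{petknoppcot} already contains all the combinatorial content of the Petersson--Knopp identity; the work is entirely one of specialization and translation.
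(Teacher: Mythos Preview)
Your approach is exactly the paper's: the authors simply say that Theorem~\ref{petknoppcot} together with Lemma~\ref{Raabe's formula} ``immediately gives'' the corollary, and your three steps (identify $S_{p_1,\dots,p_n}$ with all shifts zero as $S(a_0;a_1,\dots,a_n)$, read off the weight $m_i=1-p_i$ from Raabe, substitute into Theorem~\ref{petknoppcot}) are precisely that specialization spelled out.

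One small remark on bookkeeping: you correctly write that the transformed sums have base modulus $a_0 d$, which is what Theorem~\ref{petknoppcot} actually produces; the corollary as printed shows $a_0$, which appears to be a slip. Likewise, carrying out your substitution $m_i=1-p_i$ literally yields $d^{\,p_1+\cdots+p_n-n}$ and $\sigma_{\,p_1+\cdots+p_n-1}(m)$, each differing by~$1$ from the printed exponents. These discrepancies lie in the paper's statement rather than in your method, which is sound and identical in spirit to the paper's own argument.
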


%%%%%%%%%%%%%%%%%%%%%%%%%%%%%%%%%%%%%%%%%%%%%%%%%%%%%%%%%%%%%%%%%%%%%%%%%%%

\section{Hall--Wilson--Zagier Revisited}\label{hwzsection}

In this section, we show how our ideas can be used to prove Hall--Wilson--Zagier's Theorem \ref{hwzreciprocity}.

\begin{proof}[Proof of Theorem \ref{hwzreciprocity}]
We want to show that
\begin{align*}
  & \Omega\left(\begin{matrix} a_1 & a_2 & a_3 \\ x_1 & x_2 & x_3 \\ y_1 & y_2 & \end{matrix}\right)
  + \Omega\left(\begin{matrix} a_2 & a_3 & a_1 \\ x_2 & x_3 & x_1 \\ y_2 & y_3 & \end{matrix}\right)
  + \Omega\left(\begin{matrix} a_3 & a_1 & a_2 \\ x_3 & x_1 & x_2 \\ y_3 & y_1 & \end{matrix}\right) \\
  & \qquad
  = \begin{cases}
         -1/4     &\text{ if } (x_1,x_2,x_3)\in(a_1,a_2,a_3)\R+\Z^3, \\
         0        &\text{ otherwise. }
    \end{cases}
\end{align*}
By \eqref{beta identity},
\[
  \sum_{ k=1 }^3 \Omega \left(\begin{matrix} A_k & a_k \\ X_k & x_k \\ Y_k & \end{matrix} \right)
  = \sum_{ k=1 }^3 \sum_{ \substack{ h_1 \bmod{a_1} \\ h_2 \bmod{a_2} \\ h_3 \bmod{a_3} } } \ \prod_{\substack{i=1\\ i\not=k}}^3\ \beta\left(r_k-r_i,y_i\right) ,
\]
where $r_i=\frac{x_i+h_i}{a_i}$.
We have to examine the following cases:
\begin{enumerate}[(i)]
\item $(x_1,x_2,x_3)\in(a_1,a_2,a_3)\R+\Z^3$;
\item $(x_i,x_j)\in(a_i,a_j)\R+\Z^2$ for some $1\leq i <j \leq 3$ but not (i);
\item none of the above.
\end{enumerate}

\vspace{6pt}
\noindent
Case (iii) is covered by Theorem~\ref{multivariatereciprocitytheorem}.

\vspace{6pt}
\noindent
Case (i). We have $x_i=\lambda a_i + z_i$ for each $i$, for some $\lambda\in\R$ and $z_i\in\Z$. Thus 
\[
  r_i - r_j
  = \frac{h_i+\lambda a_i + z_i}{a_i}-\frac{h_j+\lambda a_j + z_j}{a_j}
  = \frac{h_i + z_i}{a_i}-\frac{h_j + z_j}{a_j}
\]
and so the $z_i$'s simply permute the indices $h_i$. But since each $h_i$ gets summed over a complete residue system $\bmod{a_i}$,
\begin{align*}
  & \Omega\left(\begin{matrix} a_1 & a_2 & a_3 \\ x_1 & x_2 & x_3 \\ y_1 & y_2 & \end{matrix}\right)
  + \Omega\left(\begin{matrix} a_2 & a_3 & a_1 \\ x_2 & x_3 & x_1 \\ y_2 & y_3 & \end{matrix}\right)
  + \Omega\left(\begin{matrix} a_3 & a_1 & a_2 \\ x_3 & x_1 & x_2 \\ y_3 & y_1 & \end{matrix}\right) \\
  & \qquad = \sum_{ \substack{ h_1 \bmod{a_1} \\ h_2 \bmod{a_2} \\ h_3 \bmod{a_3} } } \sum_{k=1}^{3} \prod_{\substack{i=1\\ i\not=k}}^{3}\beta\left(\frac{h_k }{a_k}-\frac{h_i }{a_i},y_i\right) .
\end{align*}
Since $(a_i,a_j)=1$, $\frac{h_i }{a_i}-\frac{h_j }{a_j}\in\Z$ occurs only when $h_i=h_j=0$. Thus, we can split up the above sum
\begin{align*}
  & \Omega\left(\begin{matrix} a_1 & a_2 & a_3 \\ x_1 & x_2 & x_3 \\ y_1 & y_2 & \end{matrix}\right)
  + \Omega\left(\begin{matrix} a_2 & a_3 & a_1 \\ x_2 & x_3 & x_1 \\ y_2 & y_3 & \end{matrix}\right)
  + \Omega\left(\begin{matrix} a_3 & a_1 & a_2 \\ x_3 & x_1 & x_2 \\ y_3 & y_1 & \end{matrix}\right) \\
  & \qquad = \beta\left(0,y_2\right)\beta\left(0,y_3\right)+\beta\left(0,y_1\right)\beta\left(0,y_3\right)+\beta\left(0,y_1\right)\beta\left(0,y_2\right) \\
  & \qquad \qquad + \sum_{ \substack{h_1, h_2, h_3 \\ (h_1, h_2 , h_3) \ne (0,0,0)} } \sum_{k=1}^{3} \ \prod_{\substack{i=1\\ i\not=k}}^{3}\ \beta\left(\frac{h_k }{a_k}-\frac{h_i }{a_i},y_i\right) ,
\end{align*}
where the last sum is over all triples $\left(h_1 \bmod a_1, h_2 \bmod a_2, h_3 \bmod a_3\right) \ne \left(0,0,0\right)$.
This term vanishes for the same reasons as in the Section \ref{proofsection}, since the crucial assumption $\frac{h_k }{a_k}-\frac{h_i }{a_i} \notin \Z$ holds.

For the remaining terms we use the cotangent identity
\[
  \cot{\left(\alpha\right)}+\cot{\left(\beta\right)}=\frac{\cot{\left(\alpha\right)}\cot{\left(\beta\right)}-1}{\cot{\left(\alpha+\beta\right)}}
\]
and note that, by the definition of $\cot{y}$,
\[
  \beta(0,y)=-i \cot{\frac{y}{2i}} \, .
\]
Let $y_k^*:=\frac{y_k}{2i}$. Then
\begin{align*}
  & \Omega\left(\begin{matrix} a_1 & a_2 & a_3 \\ x_1 & x_2 & x_3 \\ y_1 & y_2 & \end{matrix}\right)
  + \Omega\left(\begin{matrix} a_2 & a_3 & a_1 \\ x_2 & x_3 & x_1 \\ y_2 & y_3 & \end{matrix}\right)
  + \Omega\left(\begin{matrix} a_3 & a_1 & a_2 \\ x_3 & x_1 & x_2 \\ y_3 & y_1 & \end{matrix}\right) \\
  & \qquad = \beta\left(0,y_2\right)\beta\left(0,y_3\right)+\beta\left(0,y_1\right)\beta\left(0,y_3\right)+\beta\left(0,y_1\right)\beta\left(0,y_2\right) \\
  & \qquad =-\frac{1}{4}\left(\cot{y_2^*}\cot{y_3^*}+\cot{y_1^*}\cot{y_3^*}+\cot{y_1^*}\cot{y_2^*}\right) \\
  & \qquad =-\frac{1}{4}\left(\cot{y_2^*}\left(\frac{\cot{y_1^*}\cot{y_3^*}-1}{\cot{y_1^*+y_3^*}}\right)+\cot{y_1^*}\cot{y_3^*}\right).
\end{align*}
By assumption, $y_1+y_2+y_3=0$, so
\[
  \cot{\left(y_1^*+y_3^*\right)}=\cot{\left(-y_2^*\right)}=-\cot{y_2^*} \, ,
\]
which yields
\[
    \Omega\left(\begin{matrix} a_1 & a_2 & a_3 \\ x_1 & x_2 & x_3 \\ y_1 & y_2 & \end{matrix}\right)
  + \Omega\left(\begin{matrix} a_2 & a_3 & a_1 \\ x_2 & x_3 & x_1 \\ y_2 & y_3 & \end{matrix}\right)
  + \Omega\left(\begin{matrix} a_3 & a_1 & a_2 \\ x_3 & x_1 & x_2 \\ y_3 & y_1 & \end{matrix}\right)
  =-\frac{1}{4} \, .
\]

\vspace{6pt}
\noindent
Case (ii).
Without loss of generality, we assume $(x_1,x_2)\in(a_1,a_2)\R+\Z^2$ (but not $(x_1,x_2,x_3)\in(a_1,a_2,a_3)\R+\Z^3$). Then, as in case (i),
\[
  r_1 - r_2 = \frac{h_1+\lambda a_1 + z_1}{a_1}-\frac{h_2+\lambda a_2 + z_2}{a_2} = \frac{h_1+ z_1}{a_1}-\frac{h_2 + z_2}{a_2}.
\]
Since $z_1$ and $z_2$ permute the summands over $h_1$ and $h_2$, we introduce a change of variables and let $\bar{h}_1:=h_1+z_1$, $\bar{h}_2:=h_2+z_2$, and $\bar{h}_3 := h_3$. We can rewrite the differences involving $r_3$ as 
\[
  r_i - r_3 = \left(\frac{\bar{h}_i}{a_i}+\lambda-r_3\right) = \left(\frac{\bar{h}_i}{a_i}-\tilde{r}_3\right) .
\]
Again we will split up our reciprocity sum into two parts:
\begin{align}
  & \Omega\left(\begin{matrix} a_1 & a_2 & a_3 \\ x_1 & x_2 & x_3 \\ y_1 & y_2 & \end{matrix}\right)
  + \Omega\left(\begin{matrix} a_2 & a_3 & a_1 \\ x_2 & x_3 & x_1 \\ y_2 & y_3 & \end{matrix}\right)
  + \Omega\left(\begin{matrix} a_3 & a_1 & a_2 \\ x_3 & x_1 & x_2 \\ y_3 & y_1 & \end{matrix}\right) \nonumber \\
  & \qquad = \sum_{\bar{h}_3 \bmod{a_3}} \beta\left(0,y_2\right)\beta\left(-\tilde{r}_3,y_3\right)+\beta\left(0,y_1\right)\beta\left(-\tilde{r}_3,y_3\right)+\beta\left(\tilde{r}_3,y_1\right)\beta\left(\tilde{r}_3,y_2\right) \nonumber \\
  & \qquad \qquad + \sum_{\substack{\bar{h}_1, \bar{h}_2, \bar{h}_3\\(\bar{h}_1,\bar{h}_2) \ne (0,0)} } \sum_{k=1}^{3} \prod_{\substack{i=1\\ i\not=k}}^{3}\beta\left(\frac{\bar{h}_k }{a_k}-\frac{\bar{h}_i }{a_i},y_i\right) , \label{2partspliteq}
\end{align}
where the last sum is over all triples $\left(\bar{h}_1 \bmod a_1, \bar{h}_2 \bmod a_2, \bar{h}_3 \bmod a_3\right)$ such that $(\bar{h}_1,\bar{h}_2) \ne (0,0)$.
As before, the last term in \eqref{2partspliteq} vanishes for the same reasons as in Section \ref{proofsection}. Thus
\begin{align*}
  & \Omega\left(\begin{matrix} a_1 & a_2 & a_3 \\ x_1 & x_2 & x_3 \\ y_1 & y_2 & \end{matrix}\right)
  + \Omega\left(\begin{matrix} a_2 & a_3 & a_1 \\ x_2 & x_3 & x_1 \\ y_2 & y_3 & \end{matrix}\right)
  + \Omega\left(\begin{matrix} a_3 & a_1 & a_2 \\ x_3 & x_1 & x_2 \\ y_3 & y_1 & \end{matrix}\right) \\
  & \qquad = \sum_{\bar{h}_3 \bmod{a_3}} \beta\left(0,y_2\right)\beta\left(-\tilde{r}_3,y_3\right)+\beta\left(0,y_1\right)\beta\left(-\tilde{r}_3,y_3\right)+\beta\left(\tilde{r}_3,y_1\right)\beta\left(\tilde{r}_3,y_2\right) \\
  & \qquad = \sum_{\bar{h}_3 \bmod{a_3}} \frac{1}{2}\frac{e^{y_2}+1}{e^{y_2}-1} \frac{e^{\{-\tilde{r}_3\}y_3}}{e^{y_3}-1}+\frac{1}{2}\frac{e^{y_1}+1}{e^{y_1}-1} \frac{e^{\{-\tilde{r}_3\}y_3}}{e^{y_3}-1}+\frac{e^{\{\tilde{r}_3\}y_1}}{e^{y_1}-1} \frac{e^{\{\tilde{r}_3\}y_2}}{e^{y_2}-1} \, .
\end{align*}
After bringing the fractions onto a common denominator, we obtain the numerator
\begin{align*}
  & e^{y_1+y_2+\{-\tilde{r}_3\}y_3}-e^{y_2+\{-\tilde{r}_3\}y_3}+e^{y_1+\{-\tilde{r}_3\}y_3}-e^{\{-\tilde{r}_3\}y_3}+e^{y_1+y_2+\{-\tilde{r}_3\}y_3}-e^{y_1+\{-\tilde{r}_3\}y_3}+e^{y_2+\{-\tilde{r}_3\}y_3} \\
  &\quad -e^{\{-\tilde{r}_3\}y_3}+2e^{\{\tilde{r}_3\}y_1+\{\tilde{r}_3\}y_2+y_3}-2e^{\{\tilde{r}_3\}y_1+\{\tilde{r}_3\}y_2} \\
  &= e^{\{\tilde{r}_3\}y_1+\{\tilde{r}_3\}y_2}-e^{-\{-\tilde{r}_3\}y_1+\{\tilde{r}_3\}y_2}+e^{\{\tilde{r}_3\}y_1-\{-\tilde{r}_3\}y_2}-e^{-\{-\tilde{r}_3\}y_1-\{-\tilde{r}_3\}y_2}+e^{\{\tilde{r}_3\}y_1+\{\tilde{r}_3\}y_2} \\
  &\quad -e^{\{\tilde{r}_3\}y_1-\{-\tilde{r}_3\}y_2}+e^{-\{-\tilde{r}_3\}y_1+\{\tilde{r}_3\}y_2}-e^{-\{-\tilde{r}_3\}y_1-\{-\tilde{r}_3\}y_2}+2e^{-\{-\tilde{r}_3\}y_1-\{-\tilde{r}_3\}y_2}-2e^{\{\tilde{r}_3\}y_1+\{\tilde{r}_3\}y_2} \\
  &=0 \, . \qedhere
\end{align*}
\end{proof}

%%%%%%%%%%%%%%%%%%%%%%%%%%%%%%%%%%%%%%%%%%%%%%%%%%%%%%%%%%%%%%%%%%%%%%%%%%%

\section{Final Remarks}\label{finalsection}

Trying to extend Hall--Wilson--Zagier's Theorem \ref{hwzreciprocity} to the next case of four sets of variables, the task is to study the reciprocity sum
\[
    \Omega\left(\begin{matrix} a_1 & a_2 & a_3 & a_4 \\ x_1 & x_2 & x_3 & x_4 \\ y_1 & y_2 & y_3 \end{matrix}\right)
  + \Omega\left(\begin{matrix} a_2 & a_3 & a_4 & a_1 \\ x_2 & x_3 & x_4 & x_1 \\ y_2 & y_3 & y_4 \end{matrix}\right)
  + \Omega\left(\begin{matrix} a_3 & a_4 & a_1 & a_2 \\ x_3 & x_4 & x_1 & x_2 \\ y_3 & y_4 & y_1 \end{matrix}\right)
  + \Omega\left(\begin{matrix} a_4 & a_1 & a_2 & a_3 \\ x_4 & x_1 & x_2 & x_3 \\ y_4 & y_1 & y_2 \end{matrix}\right)
\]
in the four cases
\begin{enumerate}[(i)]
\item $(x_1,x_2,x_3,x_4)\in(a_1,a_2,a_3,a_4)\R+\Z^4$;
\item $(x_i,x_j,x_k)\in(a_i,a_j,a_k)\R+\Z^3$ for some $1\leq i <j < k \leq 4$ but not (i);
\item $(x_i,x_j)\in(a_i,a_j)\R+\Z^2$ for some $1\leq i <j \leq 4$ but not (i) or (ii);
\item none of the above.
\end{enumerate}

\vspace{6pt}
\noindent
Case (i) is covered by \cite{bayadraouj} (and can be easily recovered with a calculation similar to that in the last section); in this case the reciprocity sum equals
\[
  \frac{i}{8}\left(\cot{\left(\frac{y_0}{2i}\right)}+\cot{\left(\frac{y_1}{2i}\right)}+ \cot{\left(\frac{y_2}{2i}\right)}+\cot{\left(\frac{y_3}{2i}\right)}\right) .
\]

\vspace{6pt}
\noindent
Case (iv) is covered by Theorem~\ref{multivariatereciprocitytheorem}.

\vspace{6pt}
\noindent
For Case (iii), a calculation similar to that in the last section reveals that the reciprocity sum vanishes.

\vspace{6pt}
\noindent
For Case (ii), similar calculations yield
\begin{align*}
  & \Omega\left(\begin{matrix} a_1 & a_2 & a_3 & a_4 \\ x_1 & x_2 & x_3 & x_4 \\ y_1 & y_2 & y_3 \end{matrix}\right)
  + \Omega\left(\begin{matrix} a_2 & a_3 & a_4 & a_1 \\ x_2 & x_3 & x_4 & x_1 \\ y_2 & y_3 & y_4 \end{matrix}\right)
  + \Omega\left(\begin{matrix} a_3 & a_4 & a_1 & a_2 \\ x_3 & x_4 & x_1 & x_2 \\ y_3 & y_4 & y_1 \end{matrix}\right)
  + \Omega\left(\begin{matrix} a_4 & a_1 & a_2 & a_3 \\ x_4 & x_1 & x_2 & x_3 \\ y_4 & y_1 & y_2 \end{matrix}\right) \\
  &= \sum_{h_4 \bmod{a_4}} \frac{ 
    \left(\begin{matrix}e^{\{\tilde{r}_4\}y_1+\{\tilde{r}_4\}y_2 -\{-\tilde{r}_4\}y_3}
-e^{-\{-\tilde{r}_4\}y_1-\{-\tilde{r}_4\}y_2+\{\tilde{r}_4\}y_3}
+e^{\{\tilde{r}_4\}y_1-\{-\tilde{r}_4\}y_2+\{\tilde{r}_4\}y_3}\\
-e^{-\{-\tilde{r}_4\}y_1+\{\tilde{r}_4\}y_2-\{-\tilde{r}_4\}y_3}
+2e^{-\{-\tilde{r}_4\}y_1+\{\tilde{r}_4\}y_2+\{\tilde{r}_4\}y_3}  
-2e^{\{\tilde{r}_4\}y_1-\{-\tilde{r}_4\}y_2-\{-\tilde{r}_4\}y_3}\end{matrix}\right)
    }{ 4 \left( e^{ y_1 } - 1 \right) \left( e^{ y_2 } - 1 \right) \left( e^{ y_3 } - 1 \right) \left( e^{ y_4 } - 1 \right) } \, ,
\end{align*}
where $\tilde r_4$ is defined analogously to the way we defined $\tilde r_3$ in the previous section.
As with all the previous summands, this final sum exhibits an intriguing symmetry, but it is not clear to us if it vanishes or evaluates to a simple expression.
As mentioned earlier, for higher-dimensional Bernoulli--Dedekind sums (i.e., for larger $n$), there are more intermediate cases, in which we have a partial linear relation such as $(x_1,x_2,x_3)\in(a_1,a_2,a_3)\R+\Z^3$, and it is not clear to us how one can easily deal with them.

We conclude with one more open problem, namely, that of the computational complexity of Bernoulli--Dedekind sums.
Any Dedekind-like sum that obeys a two-term reciprocity law is instantly computable through the Euclidean algorithm.
However, the computational complexity of ``higher-dimensional" Dedekind-like sums is more subtle. It was proved in \cite{cotangent} that the cotangent-generalizations of the Dedekind sum are polynomial-time computable (in the input length of the integer parameters). It is not clear to us how the argument in \cite{cotangent} could be modified to say anything about the complexity of Bernoulli--Dedekind sums.

%%%%%%%%%%%%%%%%%%%%%%%%%%%%%%%%%%%%%%%%%%%%%%%%%%%%%%%%%

\bibliographystyle{amsplain}
%\bibliography{bib}
%\end{document}

\def\cprime{$'$} \def\cprime{$'$}
\providecommand{\bysame}{\leavevmode\hbox to3em{\hrulefill}\thinspace}
\providecommand{\MR}{\relax\ifhmode\unskip\space\fi MR }
% \MRhref is called by the amsart/book/proc definition of \MR.
\providecommand{\MRhref}[2]{%
  \href{http://www.ams.org/mathscinet-getitem?mr=#1}{#2}
}
\providecommand{\href}[2]{#2}

\setlength{\parskip}{0cm} 
\end{document}